\newtheorem{theorem}{Theorem}[section]
\newtheorem{lemma}[theorem]{Lemma}
\newtheorem{trditev}[theorem]{Proposition}
\theoremstyle{definition}
\theoremstyle{remark}
\newtheorem{remark}[theorem]{Remark}
\numberwithin{equation}{section}
\newcommand{\C}{\mathbb{C}}
\newcommand{\R}{\mathbb{R}}
\newcommand{\I}{\mathcal{I}}
\newcommand{\hra}{\hookrightarrow}
\newcommand{\CP}{\mathbb{C}\mathrm{P}}
\def\rank{\mathop{\rm rank}\nolimits}
\def\sign{\mathop{\rm sign}\nolimits}
\def\Trace{\mathop{\rm Tr}\nolimits}
\def\diag{\mathop{\rm diag}\nolimits}
\def\Rea{\mathop{\rm Re}\nolimits}
\def\Ima{\mathop{\rm Im}\nolimits}
\def\Discr{\mathop{\rm Discr}\nolimits}
\def\Res{\mathop{\rm Res}\nolimits}
\newcommand{\doublewidetilde}[1]{{%
  \mathpalette\double@widetilde{#1}%
}}
\newcommand{\double@widetilde}[2]{%
  \sbox\z@{$\m@th#1\widetilde{#2}$}%
  \ht\z@=.9\ht\z@
  \widetilde{\box\z@}%
}
\begin{document}
\title[]
{On Normal Forms of Complex Points of codimension 2 submanifolds}
\author{Marko Slapar and Tadej Star\v{c}i\v{c}}
\address{Faculty of Education, University of Ljubljana, Kardeljeva Plo\v{s}\v{c}ad 16, 1000 Lju\-blja\-na, Slovenia}
\address{Faculty of Mathematics and Physics, University of Ljubljana,
  Jadranska 19, 1000 Lju\-blja\-na, Slovenia}
\address{Institute of Mathematics, Physics and Mechanics, Jadranska
  19, 1000 Ljubljana, Slovenia}
\email{marko.slapar@pef.uni-lj.si}
\address{Faculty of Education, University of Ljubljana, Kardeljeva Plo\v{s}\v{c}ad 16, 1000 Lju\-blja\-na, Slovenia}
\address{Institute of Mathematics, Physics and Mechanics, Jadranska
  19, 1000 Ljubljana, Slovenia}
\email{tadej.starcic@pef.uni-lj.si}
\subjclass[2000]{32V40,58K50,15A21}
\date{January 8, 2018}


\keywords{CR manifolds, complex points, normal forms, simultaneous reduction \\
\indent Research supported by grant P1-0291 and J1-7256 
from ARRS, Republic of Slovenia.}

\begin{abstract} In this paper we present some linear algebra behind
  quadratic parts of quadratically flat complex points of codimension two real
  submanifold in a complex manifold. Assuming some extra nondegenericity and using the result of Hong, complete normal form descriptions can be given,
 and in low dimensions, we obtain 
a  complete classification without any extra assumptions. 
\end{abstract}

\maketitle

\section{Introduction} 

Let $f\colon M^{2n}\hra X^{n+1}$ be a real $2n$-dimensional smooth manifold,
embedded into an $(n+1)$-dimensional complex manifold $(X,J)$. We assume
that $f$ is at least $\mathcal C^2$-smooth. A point
$p\in M$ is called \textit{CR regular}, if the dimension of the complex 
tangent space $T^C_pM=df(T_pM)\cap Jdf(T_pM)\subset T_{f(p)}X$ equals the algebraically expected
dimension $n-1$. If the complex dimension of $T_p^CM$ equals $n$, we
call such a point \textit{complex point}. Complex points can be seen
as the intersection of $Gf(M)$ with the subbundle $\CP (T^*X,J)$, where $Gf\colon M\to f^*Gr_{2n} TX$ is the  Gauss map $p\mapsto
  [df(T_pM)]\in Gr_{2n} T_{f(p)}X$ (see \cite{Lai}).   
By Thom transversality, for generic embeddings, the intersection
is transverse and so complex points are isolated by a simple dimension
count. We call such transverse complex points \textit{nondegenerate}
and we can assign a sign to such an intersection (note that the sign
can be assigned even in the case of nonorientable $M$).  If the sign
is positive, the complex point is called \textit{elliptic}, if it is
negative, \textit{hyperbolic}.  

Topological structure of complex points was first studied by
Lai \cite{Lai} and, specifically for surfaces, mostly by Forstneri\v c \cite{F}.  Classification of complex points up
to isotopy is treated in \cite{S1, S2, S3}. Most
research into complex points has been focused into trying to
understand  local hulls of holomorphy. 
This direction was started by Bishop \cite{Bishop} and is now
well understood in the case of surfaces through the works of Kenig and
Webster \cite{KW}, Moser and Webster \cite{MW} and Huang
\cite{Huang} among others. Global theory (filling spheres with
holomorphic discs) was studied by Bedford and Gaveau \cite{BG} and
Bedford and Klingenberg \cite{BK}, and has later resulted in many
important theorems in symplectic and contact geometry.  In higher
dimensions, assuming real analyticity, a similar
problem of finding an appropriate Levi flat hypersurface that is
bounded by the submanifold near the complex point, is treated
first in the papers of Dolbeault, Tomasini and Zaitsev \cite{DTZ1,
  DTZ2} and to a greater generality by  Huang and Yin \cite{HY2, HY3} and Fang and
Huang \cite{FH}. The problem is equivalent to understanding when the
manifold can be holomorphically flattened near the complex point. 

A closely related topic is trying to understand normal forms for
manifolds near real analytic complex points. Again, the situation is well understood
in the surface case by the work of  Moser and Webster \cite{MW} and Gong \cite{Gong1,
  Gong2}, but it seems to be much more intractable in higher
dimensions. It is already an interesting problem to completely classify
complex points up to their quadratic part. 

If $p\in M$ is a complex point for a $\mathcal{C}^2$-embedding $f\colon M^{2n}\to
X^{n+1}$, then in some local coordinates near $f(p)$, $M$ is given by
an equation of the form

\begin{equation}\nonumber
w=\overline{z}^TAz+\Rea (z^TBz)+o(|z|^2),
\end{equation}

\noindent where $A$ and $B$ are some $n\times n$ complex matrices with $B$
symmetric. By applying a biholomorphic change of coordinates near $f(p)$, while
preserving a general structure of the above equation, the pair of
matrices $(A,B)$ transforms into  $(cP^*AP,\overline c P^TBP)$, where
$c\in \C$, $|c|=1$ and $P$ is a nonsingular $n\times n$ matrix. This is more carefully
explained in the next section. Classifying complex points up
to their quadratic term means finding nice normal form representatives
for matrices $A$ and $B$ for this congruence relation, and it reduces to a linear algebra problem. The
classification is trivial in the case of $n=1$, and for $n=2$  was
done by Coffman \cite{Coff}. Note that if $A$ is positive definite, we
can find a nonsingular $P$, so that $P^*AP=I$ and then use Autonne-Takagi theorem (any
complex symmetric matrix is unitary $T$-congruent to a real diagonal matrix
with non-negative entries) to simplify $B$. For general Hermitian $A$ (even for
semi-definite), we cannot simultaneously diagonalize both $A$ and $B$,
as can quickly be seen by a simple example
$A=\left[\begin{smallmatrix} 1&0\\0&0\end{smallmatrix}\right]$,
$B=\left[\begin{smallmatrix} 0&b\\ b&0\end{smallmatrix}\right]$, where
$b\ne 0$.

The purpose of this paper is to give a
better understanding of the classification
of complex points up to their quadratic term for $n>2$ in the
quadratically flat case (when $A$ is Hermitian after a multiplication
by a nonzero complex scalar). Assuming that $\det B\ne 0$ in the pair $(A,B)$, the work of Hong
\cite{Hong89, Hong90}, Hong and Horn \cite{HongHorn} and others, 
immediately gives complete normal form descriptions in all dimensions. We summarize those results in Section
\ref{sec1} (FORM 1 and FORM 2). In both these forms the matrix $B$ is
first made into the identity matrix using Autonne-Takagi theorem, and then $A$ is simplified by
$*$-congruence using complex orthogonal matrices. Since we often want the matrix $A$ to be in its simplest form, we also
introduce FORM 3, where first $A$ is diagonalized using Sylvester's
theorem and then $B$ is
simplified while preserving $A$. At the end of this section we also point out a much simpler description
in the generic case of quadratically flat complex points (Proposition
\ref{nondeg}).  
In Section \ref{sec2}, we extend these results in dimensions $n=3$ and $n=4$ 
(see Theorem \ref{clasB}), to obtain a  complete classification without
any extra assumptions on the pair $(A,B)$. While we fail to give a nice form for complete
classification in all dimensions, a quite general result is given in Lemma \ref{prepare}.

\section{Normal forms up to quadratic terms}\label{notation}

A real $2n$-manifold $M$ embedded $\mathcal{C}^2$-smoothly in a complex
$(n+1)$-manifold $X$ can locally near an isolated complex point $p\in M$ be seen as a graph:
\begin{equation}\label{BasForm1}
w=\overline{z}^TAz+\Rea (z^TBz)+o(|z|^2), \quad (w(p),z(p))=(0,0),
\end{equation}
where $(z,w)=(z_1,z_2,\ldots,z_n,w)$ are suitable local coordinates on
$X$, and $A$, $B$ are $n\times n$ complex matrices, and in addition $B$
is symmetric ($B=B^T$). A real analytic complex point $p$ is called \textit{flat},
if local holomorphic coordinates can be chosen, so that the graph of
(\ref{BasForm1}) lies in $\C^n_z\times \R\subset \C^n_z\times\C_w$. It
is called \textit{quadratically flat}, if local holomorphic coordinates can be chosen so
that the quadratic part $\overline{z}^TAz+\Rea (z^TBz)$ of (\ref{BasForm1}) is real valued
for all $z$. It is clear that this happens precisely when $A$ is Hermitian.  
 
Let $GL_n(\mathbb{C})$ denote the set of all non-singular $n\times n$
complex matrices. Any holomorphic change of coordinates that
preserves the general form of (\ref{BasForm1}) has the same effect on
the quadratic part of (\ref{BasForm1}) as a complex-linear change of the form
\[
\begin{bmatrix}
z \\
w
\end{bmatrix}
=
\begin{bmatrix}
P & b \\
0 & c
\end{bmatrix}
\begin{bmatrix}
\widetilde{z} \\
\widetilde{w}
\end{bmatrix}, \quad P\in GL_n(\mathbb{C}),\, b\in \C^n, \,c\in \mathbb{C}\setminus \{0\}
.\]
Using this linear change of coordinates, the form (\ref{BasForm1})
transforms into

\[
\widetilde{w}=\overline{\widetilde{z}}^T\left(\frac{1}{c}P^{*}AP\right)\widetilde{z}+
\frac{1}{2}{\widetilde{z}}^T\left(\frac{1}{c}P^TBP\right)\widetilde{z}+\frac{1}{2}{\overline{\widetilde{z}}}^T
\overline{\left(\frac{1}{\overline c}P^T B P\right)}
\overline{\widetilde{z}} +\frac{1}{c} b^T \widetilde{w}+o(|\widetilde{z}|^2)
\]

\noindent or

\[
\widetilde{w}-\frac{1}{c} b^T
\widetilde{w}-\frac{1}{2}{\widetilde{z}}^T\left(\frac{\overline c-c}{|c|^2}P^TBP\right)\widetilde{z}=
\overline{\widetilde{z}}^T\left(\frac{1}{c}P^{*}AP\right)\widetilde{z}+
\Rea\left({\widetilde{z}}^T\left(\frac{1}{\overline c}P^T B P\right)
\widetilde{z}\right) +o(|\widetilde{z}|^2).
\]

\noindent If we denote \(\widehat{w}=\widetilde{w}-\frac{1}{c} b^T
\widetilde{w}-\frac{1}{2}{\widetilde{z}}^T\left(\frac{\overline
    c-c}{|c|^2}P^TBP\right)\widetilde{z},\) we get the equation
\begin{equation}\label{BasForm2}
\widehat{w}=\overline{\widetilde{z}}^T\left(\frac{1}{c}P^{*}AP\right) \widetilde{z}+\Rea \left(\widetilde{z}^T \left(\frac{1}{\overline{c}} P^TBP\right) \widetilde{z}\right)+o(|\widetilde{z}|^2).
\end{equation}


It is clear that by real scaling the determinant of $P$, we can assume
that $|c|=1$. Therefore the so-called {\it $\sim$-congruence} is introduced:
\begin{equation}\label{sim-cong}
(A,B)\sim (cP^{*}AP,\overline{c}P^TBP), \quad P\in GL_n(\mathbb{C}), \,c\in \mathbb{C}\setminus \{0\}, |c|=1. 
\end{equation}
It is clearly an equivalence relation and one can study its
equivalence classes by trying to find a canonical normal form for a
pair $(A,B)$. Note that $P^{*}AP$ is Hermitian if and only if $A$ is
Hermitian, so a complex point of the form (\ref{BasForm1}) is
quadratically flat precisely when $A$ is Hermitian up to a nonzero complex
scalar multiple. It thus suffices in this case to consider that $A$ is Hermitian, $c\in\{1,-1\}$ in (\ref{sim-cong}).

The case $n=1$ is very well understood and the complex points are
always quadratically flat and locally given by an equation
$w=z\overline{z}+\frac{\gamma}{2} (z^2+\overline{z}^2)+o(|z|^2)$, $0\leq \gamma
<\infty$ or $w=z^2+\overline{z}^2+o(|z|^2)$ (Bishop
\cite{Bishop}). If they are real analytic and elliptic  ($\gamma<1$),
they are also flat (see \cite{MW}). In dimension $n=2$ a simple
description of $\sim$-congruence classes can be obtained (Coffman
\cite{Coff} and Izotov \cite{Izotov}). In a later section we
generalize this result to dimensions $3$ and $4$ in the case of
quadratically flat complex points. 

%
%
%
%
\section{Normal forms for a pair of one Hermitian matrix and one symmetric nonsingular matrix}\label{sec1}

Throughout the paper we denote by $M_1\oplus M_2\cdots\oplus M_r$,
$r\in \mathbb{N}$, the block-diagonal matrix with blocks
$M_1,\ldots,M_r$ on the main diagonal. Next, for any $m\in \mathbb{N}$,
the matrices $I_m$ and $0_m$ will be the $m\times m$ identity-matrix
and $m\times m$ zero-matrix. (Sometimes, the index $m$ might be omitted.)

By Autonne-Takagi factorization (see e.g. \cite[Corolarry
4.4.4]{HornJohn}), a complex symmetric $n\times n$ matrix $B$ is
unitary $T$-congruent to a diagonal matrix $\Lambda \oplus 0_{n-m}$,
where $0\leq m\leq n$ and possibly $\Lambda$ is a $m\times m$ diagonal
matrix with positive diagonal entries. Further, by $*$-conjugating $A$
and by $T$-conjugating $B$ with $\sqrt{\Lambda^{-1}}\oplus I_{n-m}$, we achieve that $(A,B)\sim (\widetilde{A},I_m\oplus 0_{n-m})$ for some $\widetilde{A}$. Observe that if $B$ is nonsingular, our situation reduces to the case when $B=I$ is the identity-matrix.

Next, $*$-conjugating ($T$-conjugating) with $iI$ preserves the matrix (multiplies the matrix by $-1$). Hence, for $m_1\times m_1$ matrices $A_1,B_1$ and $m_2\times m_2$ matrices $A_2,B_2$ the conjugation with $iI_{m_1}\oplus I_{m_2}$ gives
\begin{equation}\label{iI}
(A_1\oplus A_2,B_1\oplus B_2)\sim(A_1\oplus A_2,-B_1\oplus B_2),
\end{equation}
and in particular, $\sim$-congruence for Hermitian-symmetric pairs ($c\in\{1,-1\}$ in (\ref{sim-cong})) can be redefined as: 
\begin{equation}\label{sim-cong2}
(A,B)\sim (\pm P^{*}AP,P^TBP), \qquad P\in GL_n(\mathbb{C}).
\end{equation}
For Hermitian matrices $A$, $\widetilde{A}$, it then follows that
$(A,I)\sim(\widetilde{A},I)$ is equivalent to the existence of a
complex orthogonal matrix $Q$ and a constant $\varepsilon\in\{1,-1\}$,
such that $\widetilde{A}=\varepsilon Q^{*}AQ=\varepsilon
\overline{Q}^{-1}AQ$, thus the concept of $\sim$-congruence is closely
related to the notion of {\it consimilarity}, i.e. $A$ and
$\widetilde{A}$ (not necessarily Hermitian) are consimilar if and only
if there exists a non-singular (not necessarily orthogonal) matrix $Q$
such that $\widetilde{A}=\overline{Q}^{-1}AQ$. Also, note that the
problem of consimilarity is further connected to the problem of
similarity of certain corresponding matrices. We refer to
\cite{HornJohn} or to section \ref{appendix}. Appendix for the
basic properties on this topics. Based on the work of several authors
(see the discussion below) it was eventually observed by Hong
\cite{Hong90} and Bernhardsson \cite{Bern} that the following two normal forms can be obtained for a pair $(A,I)$ with $A$ Hermitian.

\begin{enumerate}
\item[{\bf FORM 1.}] $(\mathcal{H}^{\epsilon}(A),I)$,
\begin{equation}\label{NF1}
\mathcal{H}^{\epsilon}(A)=\left(\bigoplus_j \epsilon_j H_{\alpha_j}(\lambda_j)\right)\oplus\left(\bigoplus_k K_{\beta_k}(\mu_k)\right)\oplus\left( \bigoplus_l L_{\gamma_l}(\xi_l)\right),
\end{equation}
where $\epsilon=\{\epsilon_j\}$ with $\epsilon_j\in\{1,-1\}$,
$\lambda_j\in \mathbb{R}_{\ge 0} $, $\mu_k\in \mathbb{R}_{>0}$, $\xi_l\in
\mathbb{C}\setminus \mathbb{R}$ are such that $\lambda_j^2$,
$-\mu_k^2$ and $\xi_l^2$ are real non-negative, real negative and
non-real eigenvalues of $A\overline{A}$, respectively; 
\begin{equation}\label{Hmz}
H_m(z)=
\frac{1}{2}\left(
\begin{bmatrix}
0  &                &    1  & 2z\\
 &   \iddots           &   \iddots     & 1 \\
1   &  \iddots  & \iddots &  \\
2z   & 1          &     & 0 \\
\end{bmatrix}
+
i
\begin{bmatrix}
0  & 1 &             & 0 \\
-1 & \ddots &   \ddots        &   \\
   & \ddots  &  \ddots & 1 \\
0   &          & -1    & 0 \\
\end{bmatrix}
\right)\in \mathbb{C}^{m\times m},
\end{equation}
\begin{equation}\label{KLm}
K_{m}(z)=
\begin{bmatrix}
0       & -iH_m(z)    \\
iH_m(z) &  0    \\
\end{bmatrix}, \qquad
L_{m}(z)=
\begin{bmatrix}
0       & H_m(z)    \\
H^{*}_m(z) &  0    \\
\end{bmatrix}.
\end{equation}
\item[{\bf FORM 2.}] $(\mathcal{J}_E^{\epsilon}(A),E(A))$,
\begin{align}\label{NF2}
&\mathcal{J}_E^{\epsilon}(A)=\left(\bigoplus_j \epsilon_j E_{\alpha_j}J_{\alpha_j}(\lambda_j,1)\right)\oplus\left( \bigoplus_k E_{2\beta_k}J_{\beta_k}(\Lambda_k,I_2)\right),\\
& E(A)=\left(\bigoplus_j E_{\alpha_j}\right)\oplus\left( \bigoplus_k E_{2\beta_k}\right),\nonumber
\end{align}
where $\epsilon=\{\epsilon_j\}$ with $\epsilon_j\in\{1,-1\}$, and
$\lambda_j \in \mathbb{R}_{\ge 0}$, $\Lambda_k=\begin{bsmallmatrix}  
	                                            a_k & -b_k\\
	                                            b_k & a_k
                                       \end{bsmallmatrix}$ with $a_k
                                     \in \mathbb{R}_{\ge 0}$, $b_k\in \mathbb{R}\setminus\{0\}$ are such that $\lambda_j$, $a_k+i b_k$ are the eigenvalues of the corresponding double-sized matrix 
$
\begin{bsmallmatrix}
0 & \overline{A} \\
A & 0
\end{bsmallmatrix}
$; we denoted by $E_m$ the backward $m$-identity-matrix (with ones on the anti-diagonal) and 
\[
 J_m(C,D)=\begin{bmatrix}
                                                      C    &  D       & \;     & \;    \\
						      \;     & C     & \ddots & \;    \\     
						      \;     & \;      & \ddots &  D     \\
                                                      \;     & \;      & \;     & C   
                                   \end{bmatrix}\in \mathbb{C}^{rm\times rm},\,\,
  \qquad C,D\in \mathbb{C}^{r\times r}.                                 
\]
\end{enumerate}

For the sake of completeness, we recall the algorithmic procedure, how to get the normal FORM 1 (in five steps):
\begin{itemize}
\item The corresponding matrix $A\overline{A}$ is similar to its Jordan canonical form 
\[\hspace{12mm}
\mathcal{J}(A\overline{A})=\left(\bigoplus_j J_{\alpha_j}(\lambda_j^2,1)\right)\oplus \left(\bigoplus_k J_{\beta_k}(-\mu_k^2,1) \right)\oplus \left(\bigoplus_l (J_{\gamma_l}(\xi_l,1)\oplus J_{\gamma_l}(\overline{\xi}_l,1)) \right),
\]
where $\lambda_j\in \mathbb{R}_{\ge 0} $, $\mu_k\in \mathbb{R}_{>0}$, $\xi_l\in \mathbb{C}\setminus \mathbb{R}$.
\item (Hong \cite{Hong90}, Hong and Horn \cite[Theorem 3.1]{HongHorn}, Hua \cite{Hua}) $A$ is consimilar to a quasi-Jordan form
\[
\hspace{12mm}\mathcal{J}_{q}(A)=\left(\bigoplus_j J_{\alpha_j}(\lambda_j,1)\right)\oplus\left(
\bigoplus_k 
\begin{bsmallmatrix}
0  & J_{\beta_k}(\mu_k,1) \\
-J_{\beta_k}(\mu_k,1) &  0    
\end{bsmallmatrix}\right)\oplus\left( \bigoplus_l 
\begin{bsmallmatrix}
0   & J_{\gamma_l}(\xi_l,1)    \\
\overline{J_{\gamma_l}(\xi_l,1)} &  0  
\end{bsmallmatrix}\right).
\]
Note that the blocks corresponding to the eigenvalue $0$ are uniquely determined by the so-called alternating-product rank condition \cite[Theorem 4.1]{HornJohn} (see section \ref{appendix}. Appendix).
\item (Bevis, Hall and Hartwig \cite{BHH}) $\mathcal{J}_{q}(A)$ is consimilar to $(\mathcal{J}_{q}(A))^{*}$ with a symmetric matrix $S=\left(\bigoplus_j E_{\alpha_j}\right)\oplus\left( \bigoplus_k -i(E_{\beta_k}\oplus E_{\beta_j})\right)\oplus\left( \bigoplus_l (E_{\gamma_l}\oplus E_{\gamma_l})\right)$.
\item (Hong \cite[Lemma 2.1]{Hong90}) 
$A$ is consimilar to a Hermitian matrix of the form (\ref{NF1}) with $\epsilon_j=1$ for all j:
\[
\mathcal{H}^{1}(A)=P^{-1}\mathcal{J}_{q}(A)\overline{P},
\]
where $S=P^TP$ with $P=\left(\bigoplus_j P_{\alpha_j}\right)\oplus\left( \bigoplus_k e^{\frac{i\pi}{4}}(P_{\beta_k}\oplus P_{\beta_j})\right)\oplus\left(\bigoplus_l (P_{\gamma_l}\oplus P_{\gamma_l})\right)$, and $P_m=\frac{e^{{\scriptscriptstyle -\frac{i\pi}{4}}}}{\sqrt{2}}(I_m+iE_m)$. (Note that $\mathcal{H}^{1}(A)$ is a Hermitian canonical form for $A$ and it is unique up to a permutation of the diagonal blocks.)
\item (Hong \cite[Theorem 2.7]{Hong89}) $A$ is consimilar with a complex orthogonal matrix to a Hermitian matrix $\mathcal{H}^{\epsilon}(A)$ of the form (\ref{NF1}) with $\epsilon=\{\epsilon_j\}$, $\epsilon_j\in\{1,-1\}$. (It implies $(A,I)\sim (\mathcal{H}^{\epsilon}(A),I)$.) 
\end{itemize}

Trivially, $\mathcal{H}^1(A)$ is consimilar to $\mathcal{H}^{\epsilon}(A)$ for any $\epsilon$, hence $\mathcal{H}^{\epsilon}(A)$ is unique up to a permutation of its diagonal blocks as well. However, $\epsilon$ might not be uniquely determined; note that $Q^*H_3(0)Q=-H_3(0)$, where $Q=-1\oplus 1\oplus -1$ and $H_3(0)$ is of the form (\ref{Hmz}) for $m=3$, $z=0$. On the other hand, the matrices $-K_m(z)$ and $-L_m(z)$ are orthogonally $*$-congruent to $K_m(z)$ and $L_m(z)$ respectively (see e.g. \cite[Lemma 2.6]{Hong89}), hence using (\ref{sim-cong2}) we get:
\begin{equation}\label{pme}
(\mathcal{H}^{-\epsilon}(A),I)\sim (\pm \mathcal{H}^{\epsilon}(A),I).
\end{equation}

A different and more direct approach was used by Bernhardsson \cite[Theorem 6]{Bern} to obtain the normal form FORM 2. It is based on the ideas of a similar result for a pair of two symmetric matrices (Trott \cite{Trott} and Uhlig \cite{Uhlig}). Observe now that one easily transforms FORM 2 to FORM 1. Clearly, it suffices to take one Jordan block $J$ in (\ref{NF2}). By Autonne-Takagi-factorization \cite[Corolarry 4.4.4]{HornJohn} of the backward-identity matrix $E$, there exists a unitary matrix $U$ (i.e. $U^{*}U=I$) such that $U^TEU=I$. Since $(EJ,E)\sim (U^{*}EJU,I)$ and $E=(U^*)^TU^*=U \overline{U}^{-1}$ we get 
\[
\big(U^{*}EJU\big)\bigl(\overline{U^{*}EJU}\bigr)=U^{*}U \overline{U}^{-1}JU U^{T}(U^{*})^TU^{*}J\overline{U}=\overline{U}^{-1}J^2\overline{U}.
\]
Moreover, a similar computation also yields that the matrices $U^{*}EJU$ and $J$ satisfy the alternating-product rank condition, and it then follows that $\mathcal{H}^{\epsilon}(U^{*}EJU)$ of the form (\ref{NF1}) consists of only one block and it corresponds to the same eigenvalue as $J$ (see Proposition \ref{ConSP}). To sum up, given a pair $(A,I)$ with $A$ Hermitian, we see that the dimensions of the blocks and the corresponding eigenvalues of the normal forms (\ref{NF1}) and (\ref{NF2}) for $A$ agree.

We now try to find a normal form for a Hermitian-symmetric pair with a nice Hermitian matrix. By Sylvester's inertia theorem there exists a non-singular matrix $Q$ such that
\begin{equation}\label{inertia}
\mathcal{I}(A)=Q^{*}AQ=-I_p\oplus I_r\oplus 0_{n-p-r}, \qquad 
\end{equation}
called the {\it inertia matrix} of a $n\times n$ matrix $A$.
Moreover,  
the columns $q_j,\ldots,q_n$ of $Q$ are orthogonal eigenvectors with the corresponding (real) eigenvalues $\lambda_1,\ldots,\lambda_n$ of $A$ and such that $|q_j|=\frac{1}{\sqrt{|\lambda_j|}}$ for $\lambda_j\neq 0$, and it can be assumed $|q_j|=1$ if $\lambda_j=0$. By setting 
\begin{equation}\label{NBA}
\mathcal{N}_{B}(A)=Q^{T}BQ,
\end{equation}
 we have $(A,B)\sim (\mathcal{I}(A),\mathcal{N}_{B}(A))$.
Note that $\mathcal{N}_B(A)$ is determined up to the choice of the ordered orthogonal bases of the eigenspaces of $A$ of dimension greater than one. Using the normal form FORM 1 and (\ref{inertia}),(\ref{NBA}) one can get the third normal form for $(A,I)\sim (\mathcal{H}^{\epsilon}(A),I)$:
\begin{enumerate}
\item[{\bf FORM 3.}] $(\mathcal{I}(A),\mathcal{N}_I(A))$,
\begin{align}\label{NF3}
&\mathcal{I}(A)=\left(\bigoplus_j \epsilon_j \mathcal{I}\big(H_{\alpha_j}(\lambda_j)\big)\right)\oplus\left( \bigoplus_k \mathcal{I}\big(K_{\beta_k}(\mu_k)\big)\right)\oplus\left( \bigoplus_l \mathcal{I}\big(L_{\gamma_l}(\xi_l)\big)\right),\\ \nonumber
&\mathcal{N}_{I}(A)=\left(\bigoplus_j \mathcal{N}_{I_{\alpha_j}}\big(H_{\alpha_j}(\lambda_j)\big) \right)\oplus \left(\bigoplus_k \mathcal{N}_{I_{2\beta_k}}\big(K_{\beta_k}(\mu_k)\big) \right)\oplus \left( \bigoplus_l \mathcal{N}_{I_{2\gamma_l}}\big(L_{\gamma_l}(\xi_l)\big) \right),
\end{align}
where $\epsilon=\{\epsilon_j\}$ with $\epsilon_j\in\{1,-1\}$,
$\lambda_j\in \mathbb{R}_{\ge 0} $, $\mu_k\in \mathbb{R}_{>0}$,
$\xi_l\in \mathbb{C}\setminus \mathbb{R}$ are such that $\lambda_j^2$,
$-\mu_k^2$ and $\xi_l^2$ are real and non-real eigenvalues of
$A\overline{A}$, respectively; and $H_{\alpha_j}(\lambda)$ and $K_{\beta_k}(\mu_k)$, $L_{\gamma_l}(\xi_l)$ are of the form (\ref{Hmz}) or (\ref{KLm}).
\end{enumerate}

For few small $m$'s we can diagonalize Hermitian matrices $H_m(x)$, $K_m(y)$, $L_m(z)$, $x\in \mathbb{R}_{\geq 0}$, $y\in \mathbb{R}_{>0}$, $z\in \mathbb{C}\setminus\mathbb{R}$. If the matrices are of dimension $3$ or $4$ the computations get very tedious but are still manageable, unfortunately the form of the eigenvectors is not so simple. The problem to diagonalize matrices of bigger sizes lies clearly in finding their eigenvalues.

We start with $H_m(x)$, $x\in \mathbb{R}_{\geq 0}$: 
\begin{itemize}
\item $H_1(x)=x$.
\item $H_2(x)=\frac{1}{2}
\begin{bmatrix}
1 & 2x+i\\
2x-i & 1
\end{bmatrix}$,\quad \\ 
$\Delta_{H_2(x)}(\lambda)=\lambda^2-\lambda-x^2,\quad \lambda_{1,2}(x)=\frac{1}{2}(1\pm \sqrt{1+4x^2})$,\\
$Q(x)=
\begin{bmatrix}
-\frac{w}{|w|\sqrt{|w|-1}} & \frac{w}{|w|\sqrt{1+|w|}}\\
\frac{1}{\sqrt{|w|-1}} & \frac{1}{\sqrt{1+|w|}}
\end{bmatrix}
$ \textrm{ for } $x> 0$, $w=2x+i$, \quad$Q(0)=\frac{1}{\sqrt{2}}
\begin{bmatrix}
i & -i\\
1 & 1
\end{bmatrix}$,\\ 
$\mathcal{N}_{I_2}(H_2(x))=\left\{
\begin{array}{ll}
\tfrac{1}{2x-i}
\begin{bmatrix}
\frac{4x}{\sqrt{1+4x^2}-1} & -\frac{i}{x}\\
-\frac{i}{x} & \frac{4x}{\sqrt{1+4x^2}+1}
\end{bmatrix}, & x> 0\\
\begin{bmatrix}
0 & 1\\
1 & 0
\end{bmatrix}, & x=0
\end{array}
\right.$.
\item $H_3(x)=\frac{1}{2}
\begin{bmatrix}
0 & 1+i & 2x\\
1-i & 2x & 1+i\\
2x & 1-i & 0
\end{bmatrix}$,\\ $\Delta_{H_3(x)}(\lambda)=-\lambda^3+x\lambda^2+(x^2+1)\lambda-x^3$\,
($\Delta_{H_3(0)}(\lambda)=-(\lambda+1)(\lambda-1)\lambda$),\\ $Q(0)=\frac{1}{2}
\begin{bmatrix}
i & i & -i\sqrt{2}\\
-1-i & 1+i & 0\\
1 & 1 & \sqrt{2}
\end{bmatrix}$, $\mathcal{N}_{I_3}(H_3(0))=\frac{1}{2}
\begin{bmatrix}
i & -i & \sqrt{2}\\
-i & i & \sqrt{2}\\
\sqrt{2} & \sqrt{2} & 0
\end{bmatrix}.$\\
For $x\in \mathbb{R}_{>0}$ the matrix $H_3(x)$ has one negative and two positive eigenvalues and no multiple eigenvalues (since the discriminant $\Discr(H_3(x))=32x^4+13x^2+4$ never vanishes.) It is straightforward but somewhat tedious to compute that
$\mathcal{N}_{I_3}(H_3(x))=-i[r(\lambda_j,\lambda_k)s(\lambda_j)s(\lambda_k)]_{j,k=1}^3$, where $\lambda_{1}\leq \lambda_2\leq \lambda_3$ are the eigenvalues of $H_3(x)$ and
\begin{align*}
&r(\lambda,\mu)=\lambda^2\mu^2-x^2(\lambda^2+\mu^2)+x(\lambda+\mu)+x^4,\,\\
&s(\lambda)=\tfrac{\sqrt{\lambda^2+x^2}}{(x-i\lambda)\sqrt{|\lambda|(\lambda^4+\lambda^2(1-2x^2)+x^2+x^4)}}.
\end{align*}
\item $H_4(x)=\frac{1}{2}
\begin{bmatrix}
0 & i  & 1 & 2x\\
-i & 1 & 2x+i & 1\\
1 & 2x-i & 1 & i\\
2x & 1 & -i & 0
\end{bmatrix}$,\quad \\
$\Delta_{H_4(x)}(\lambda)=\lambda^4-\lambda^3-(2x^2+1)\lambda^2+(x^2+1)\lambda+x^4$,\\
Since $\Discr(H_3(x))=400x^8+204x^6+93x^4+32x^2$, the matrix $H_4(x)$ has multiple eigenvalues precisely for $x=0$.  
\end{itemize}

We proceed with $K_m(y)$, $y\in\mathbb{R}_{>0}$:
\begin{itemize}
\item $K_1(y)=
\begin{bmatrix}
0 & -iy\\
iy & 0
\end{bmatrix}$,\quad $\Delta_{K_1(y)}(\lambda)=\lambda^2-y^2$,\quad $\lambda_{1,2}(y)=\pm y$,\\
$Q(y)=\frac{1}{\sqrt{2y}}
\begin{bmatrix}
i & -i\\
1 & 1
\end{bmatrix}$,\quad
$\mathcal{N}_{I_2}(K_1(y))=\tfrac{1}{y}
\begin{bmatrix}
0 & 1\\
1 & 0
\end{bmatrix}$.
\item $K_2(y)=\frac{1}{2}
\begin{bmatrix}
0 & 0  & -i & 1-2iy\\
0 & 0 & -1-2iy & -i\\
i & -1+2iy & 0 & 0\\
1+2iy & i & 0 & 0
\end{bmatrix}$,\quad\\
$\Delta_{K_2(y)}(\lambda)=\lambda^4-(2y^2+1)\lambda^2+y^4$, \quad $\lambda_{1,2,3,4}=\frac{1}{2}(\pm 1\pm \sqrt{1+4y^2})$,\\
$Q(y)=\frac{1}{2}
\begin{bmatrix}
\frac{iw}{|w|\sqrt{|w|+1}} & \frac{iw}{|w|\sqrt{1-|w|}} & -\frac{iw}{|w|\sqrt{1-|w|}} & -\frac{iw}{|w|\sqrt{|w|+1}}\\
\frac{i}{|w|\sqrt{|w|+1}} & -\frac{i}{\sqrt{1-|w|}} & \frac{i}{\sqrt{1-|w|}} & -\frac{i}{\sqrt{|w|+1}}\\
\frac{w}{|w|\sqrt{|w|+1}} & -\frac{w}{|w|\sqrt{1-|w|}} & -\frac{w}{|w|\sqrt{1-|w|}} & \frac{w}{|w|\sqrt{|w|+1}}\\
\frac{1}{|w|\sqrt{|w|+1}} & \frac{1}{\sqrt{1-|w|}} & \frac{1}{\sqrt{1-|w|}} & \frac{1}{\sqrt{|w|+1}}
\end{bmatrix}$, $w=2y+i$,\\
$\mathcal{N}_{I_4}(K_2(y))=\frac{1}{y(2y-i)}
\begin{bmatrix}
0 & -i  & 0 & \frac{4y}{1+\sqrt{1+4y^2}}\\
-i & 0 & \frac{4y}{-1+\sqrt{1+4y^2}} & 0\\
0 & \frac{4y}{-1+\sqrt{1+4y^2}} & 0 & -i\\
\frac{4y}{1+\sqrt{1+4y^2}} & 0 & -i & 0
\end{bmatrix}$.
\end{itemize}

Finally, we inspect $L_m(z)$, $z\in\mathbb{C}\setminus\mathbb{R}$:
\begin{itemize}
\item $L_1(z)=
\begin{bmatrix}
0 & z\\
\overline{z} & 0
\end{bmatrix}$,\quad $\Delta_{L_1(z)}(\lambda)=\lambda^2-|z|^2$, \quad $\lambda_{1,2}(z)=\pm |z|$,\\
$Q(z)=\frac{1}{\sqrt{2|z|}}
\begin{bmatrix}
-\frac{z}{|z|} & \frac{z}{|z|}\\
1 & 1
\end{bmatrix}$,\quad
$\mathcal{N}_{I_2}(L_1(z))=\tfrac{1}{2\overline{z}|z|}
\begin{bmatrix}
z+\overline{z} & \overline{z}-z\\
\overline{z}-z & z+\overline{z}
\end{bmatrix}$.
\item $L_2(z)=\frac{1}{2}
\begin{bmatrix}
0 & 0  & 1 & 2z+i\\
0 & 0 & 2z-i & 1\\
1 & 2\overline{z}+i & 0 & 0\\
2\overline{z}-i & 1 & 0 & 0
\end{bmatrix}$,\quad \\
$\Delta_{L_2(z)}(\lambda)=\lambda^4-(2|z|^2+1)\lambda^2+|z|^4$,\quad $\lambda_{1,2,3,4}=\frac{1}{2}(\pm 1\pm \sqrt{1+4|z|^2})$\\
Using Lemma \ref{lemaL4} in section \ref{appendix}. Appendix we can now compute $\mathcal{N}_{I_4}(L_2(z))$.
\end{itemize}

Observe that when $A\overline{A}$ is diagonalizable,
$\mathcal{H}^{\epsilon}(A)$ has only blocks of the form $\pm H_1(x)$,
$x\in\mathbb{R}_{\ge 0}$ and $L_1(\xi)$, $\xi\in
\mathbb{C}\setminus{\mathbb{R}}$ (since $L_1(-iy)=K_1(y)$ for $y\in
\mathbb{R}_{>0}$). Furthermore, if the discriminant
$\Discr(\Delta_{A\overline{A}})$ (and hence the resultant $\Res
(\Delta_{A\overline{A}},\Delta_{A\overline{A}}')$ does not vanish,
which is true for matrices on an open dense subset of
$GL_n(\mathbb{C})$ (on a complement of a real analytic subset of
codimension $1$), we immediately get the following simple structures of FORM 1, FORM 2 and FORM 3:

\begin{trditev}\label{nondeg}
If $\Discr( \Delta_{{\scriptscriptstyle A\overline{A}}})=(-1)^{{\scriptscriptstyle \frac{n(n+1)}{2}}}\Res (\Delta_{{\scriptscriptstyle A\overline{A}}},\Delta_{{\scriptscriptstyle A\overline{A}}}')\neq 0$ and $A$ is a Hermitian $n \times n$ matrix, then the normal forms for a pair $(A,I)$ are $(\mathcal{H}^{\epsilon}(A),I)$, $(\mathcal{J}_E^{\epsilon}(A),E(A))$ and $(\mathcal{I}(A),\mathcal{N}_I(A))$:
\begin{align*}
\mathcal{H}^{\epsilon}(A) &= \left(\bigoplus_{j}\epsilon_j x_{j}\right)\oplus\left(  \bigoplus_l 
\begin{bmatrix}
0 & \xi_l \\
\overline{\xi}_l & 0
\end{bmatrix}\right),\quad  \\
\mathcal{J}_E^{\epsilon}(A) & =\left(\bigoplus_j \epsilon_j x_j\right)\oplus\left( \bigoplus_l \frac{1}{2}
\begin{bmatrix}
i(\xi_l-\overline{\xi}_l) & \overline{\xi}_l+\xi_l\\
\overline{\xi}_l+\xi_l & i(\overline{\xi}_l-\xi_l)
\end{bmatrix}\right),\quad
E(A)  =I_{j}\oplus\left( \bigoplus_l \begin{bmatrix}   
	                                            0 & 1\\
	                                            1 & 0
                                       \end{bmatrix}\right),\\
\mathcal{I}(A) &=\left(\bigoplus_j \widetilde{\epsilon}_j\right)\oplus\left( \bigoplus_l\diag (-1,1)\right), \quad 
\widetilde{\epsilon}_j=\left\{
\begin{array}{ll}
\epsilon_j, & x_j\neq 0 \\
0,         & x_j=0
\end{array}\right.,\\
\mathcal{N}_I(A) &=\left(\bigoplus_{j}x_{j}^{-1}\right)\oplus\left( \bigoplus_l \tfrac{1}{2\overline{\xi}_l|\xi_l|}
\begin{bmatrix}
\xi_l+\overline{\xi}_l & \overline{\xi}_l-\xi_l\\
\overline{\xi}_l-\xi_l & \xi_l+\overline{\xi}_l
\end{bmatrix}\right), \quad 
\widetilde{x}_j=\left\{
\begin{array}{ll}
x_j^{-1}, & x_j\neq 0 \\
1,         & x_j=0
\end{array}\right.,
\end{align*}
where $\epsilon_j\in\{1,-1\}$ and $x_{j}\in \mathbb{R}_{\geq 0}$, $\xi_l\in \mathbb{C}\setminus{\mathbb{R}}$ are all distinct constants.
\end{trditev}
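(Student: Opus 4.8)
The plan is to reduce the statement to a single structural fact about $A\overline{A}$, after which all three forms follow by substituting the $1\times 1$ and $2\times 2$ data tabulated above into FORM 1, FORM 2 and FORM 3. First I would note that $\Discr(\Delta_{A\overline{A}})\neq 0$ says exactly that $\Delta_{A\overline{A}}$ has $n$ pairwise distinct roots, equivalently that $A\overline{A}$ is diagonalizable with every Jordan block of size one. In the notation of the algorithm leading to FORM 1 this forces $\alpha_j=\beta_k=\gamma_l=1$ throughout, so the only blocks that can survive in $\mathcal{H}^{\epsilon}(A)$ are $\epsilon_j H_1(x_j)$, $K_1(\mu_k)$ and $L_1(\xi_l)$.

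The essential step is to rule out the $K$-blocks. Here I would invoke the coneigenvalue structure of $A\overline{A}$ underlying the quasi-Jordan form $\mathcal{J}_q(A)$: a negative real eigenvalue $-\mu_k^2$ of $A\overline{A}$ always occurs with even algebraic multiplicity, its Jordan blocks pairing up. This is precisely why $K_{\beta_k}(\mu_k)$ has even size $2\beta_k$; concretely $K_1(\mu)=L_1(-i\mu)$ satisfies $K_1(\mu)\overline{K_1(\mu)}=-\mu^2 I_2$, so $-\mu^2$ already has multiplicity two. Under our hypothesis every eigenvalue is simple, so no negative real eigenvalue can occur and there are no $K$-blocks at all. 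The non-real eigenvalues still occur in conjugate pairs $\xi_l^2,\overline{\xi_l^2}$, each simple and contributing a single block $L_1(\xi_l)$, while each simple nonnegative real eigenvalue $x_j^2$ contributes a single block $\epsilon_j H_1(x_j)$; this yields the asserted $\mathcal{H}^{\epsilon}(A)$ after inserting the tabulated values $H_1(x)=x$ and $L_1(\xi)=\begin{bsmallmatrix}0&\xi\\\overline{\xi}&0\end{bsmallmatrix}$.

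It then remains to read off FORM 2 and FORM 3 from the same tables. For FORM 3 the inertia of each block $\epsilon_j H_1(x_j)$ is $\epsilon_j$ when $x_j\neq 0$ and $0$ when $x_j=0$, with $\mathcal{N}_{I}$-contribution $x_j^{-1}$ (resp.\ $1$), while for $L_1(\xi_l)$ the inertia is $\diag(-1,1)$ and $\mathcal{N}_{I_2}(L_1(\xi_l))$ is exactly the $2\times 2$ matrix computed in the $L_1$ row above; assembling these reproduces $\mathcal{I}(A)$ and $\mathcal{N}_I(A)$. For FORM 2 I would use the block-and-eigenvalue correspondence between (\ref{NF1}) and (\ref{NF2}): the real blocks become $\lambda_j=x_j$ with $E_1=1$, and each non-real block is $E_2\Lambda_l=\begin{bsmallmatrix}0&1\\1&0\end{bsmallmatrix}\Lambda_l=\tfrac{1}{2}\begin{bsmallmatrix} i(\xi_l-\overline{\xi}_l) & \overline{\xi}_l+\xi_l\\ \overline{\xi}_l+\xi_l & i(\overline{\xi}_l-\xi_l)\end{bsmallmatrix}$, once the eigenvalue $a_l+ib_l$ of $\begin{bsmallmatrix}0&\overline{A}\\ A&0\end{bsmallmatrix}$ is identified with the suitable square root $\xi_l$ of $\xi_l^2$.

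The hard part is not any of these substitutions but the even-multiplicity statement for negative real eigenvalues, which is the one place where the special structure of $A\overline{A}$ (as opposed to that of a generic matrix) enters. Everything after it is bookkeeping; the only genuine care needed is to keep the labelling of $\xi_l$ consistent across the three forms, in particular the choice of representative within each conjugate pair and the resulting sign of $b_l$.
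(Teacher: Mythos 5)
Your argument is correct, and its skeleton is exactly the paper's: $\Discr(\Delta_{A\overline{A}})\neq 0$ forces every block of FORM 1 to be of minimal size, after which the three displayed forms are obtained by substituting the $1\times 1$ and $2\times 2$ data tabulated in Section \ref{sec1}. The only genuine divergence is the step you single out as essential, namely excluding the $K$-blocks. The paper never excludes them: it absorbs them, via the identity $K_1(y)=L_1(-iy)$, so that a $K_1$-block already has the displayed shape $\begin{bsmallmatrix} 0 & \xi\\ \overline{\xi} & 0\end{bsmallmatrix}$ with $\xi$ purely imaginary (still in $\mathbb{C}\setminus\mathbb{R}$); the proposition is then stated as an immediate consequence of the observation that diagonalizability of $A\overline{A}$ alone restricts FORM 1 to $\pm H_1$ and $L_1$ blocks, with no appeal to the even-multiplicity property of negative coneigenvalues. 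Your route is also sound and, in fact, self-contained: a block $K_{\beta}(\mu)$ contributes the eigenvalue $-\mu^2$ to $A\overline{A}$ with multiplicity $2\beta\geq 2$ (concretely, $K_1(\mu)\overline{K_1(\mu)}=-\mu^2 I_2$), so its exclusion already follows from the existence of FORM 1 together with the distinctness hypothesis, without invoking any external theorem. What your extra step buys is a sharpening the paper leaves implicit: under the hypothesis no $\xi_l$ can be purely imaginary, i.e.\ every $\xi_l^2$ is genuinely non-real and $A\overline{A}$ has no negative real eigenvalues; what the paper's absorption buys is brevity, since the stated form is valid whether or not that observation is made.
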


\section{Normal forms for Hermitian-symmetric pair of matrices of dimensions $3$ and $4$}\label{sec2}

In dimensions $2$, $3$ and $4$, we can give a complete list of normal forms for Hermitian-symmetric pairs $(A,B)$ with possibly $B$ singular. In a more general setting, we first prove the following preparatory lemma. We note that the result of Hong \cite[Theorem 2.7]{Hong89} is a key ingredient in its proof.

\begin{lemma}\label{prepare}
Let $A$ be an $n\times n$ Hermitian matrix and let $B=I_m\oplus 0_{n-m}$ with $0\leq m\leq n$. Then there exists an $n\times n$ matrix $\widetilde{A}$ satisfying $(A,B)\sim (\widetilde{A},B)$, and such that it is of the form 
\begin{equation}\label{NFp}
\widetilde{A}=
\begin{bmatrix} \mathcal{H}_m^{\epsilon} & 0& Y\\
                             0 & \I & 0\\
                             Y^* &  0 & 0   
\end{bmatrix}.
\end{equation}
Here $\mathcal{I}$ is a non-singular diagonal $(n-m-k)\times (n-m-k)$ matrix with only minus-ones and ones on the diagonal, $\Trace(\mathcal{I})\geq 0$, $n-m\geq k \geq 0$, $\mathcal{H}_m^{\epsilon}$ is a $m\times m$ matrix of the form (\ref{NF1}), and $Y$ is a $m\times k$ matrix, determined up-to right-multiplication with a non-singular $k\times k$ matrix. 
Furthermore, if $\rank Y=k\leq m$, then
\[\widetilde{A}=\begin{bmatrix} 0& 0 & 0 & I_k\\
                                 0 & \mathcal{H}_{m-k}^{\epsilon} & 0 & L\\
                                 0 & 0 & \I & 0 \\
                                 I_k & L^* &0 &0 
\end{bmatrix},\] 
where $\mathcal{H}_{m-k}^{\epsilon}$ is an $(m-k)\times (m-k)$ matrix of the form (\ref{NF1}), and $L$ is an arbitrary $(m-k)\times k$ matrix. If in addition, $I+L^TL$ and $I+LL^T$ are both nonsingular, we can have $L=0$.
\end{lemma}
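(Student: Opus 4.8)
The plan is to determine which congruences $(A,B)\sim(\pm P^{*}AP, P^{T}BP)$ from (\ref{sim-cong2}) preserve $B=I_m\oplus 0_{n-m}$, and then reduce $A$ block by block. Writing $P=\begin{bsmallmatrix} Q & 0\\ R & S\end{bsmallmatrix}$ in the $m+(n-m)$ splitting, the requirement $P^{T}BP=B$ forces the upper right block of $P$ to vanish, $Q^{T}Q=I_m$ (so $Q$ is complex orthogonal) and $S\in GL_{n-m}$, while $R$ stays free. Partitioning $A=\begin{bsmallmatrix} A_{11} & A_{12}\\ A_{12}^{*} & A_{22}\end{bsmallmatrix}$ accordingly, I would record the three elementary effects: $P=I_m\oplus S$ sends $A_{22}\mapsto S^{*}A_{22}S$ and $A_{12}\mapsto A_{12}S$; $P=Q\oplus I_{n-m}$ sends $A_{11}\mapsto Q^{*}A_{11}Q$ and $A_{12}\mapsto Q^{*}A_{12}$; and the shear $P=\begin{bsmallmatrix} I_m & 0\\ R & I_{n-m}\end{bsmallmatrix}$ fixes $A_{22}$, sends $A_{12}\mapsto A_{12}+R^{*}A_{22}$, and changes $A_{11}$ only. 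I would also use that as $Q$ runs over the complex orthogonal matrices so does $Q^{*}$, so the left action on the coupling block $A_{12}$ is by the full orthogonal group.

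First I would produce the general form (\ref{NFp}). By Sylvester's theorem choose $S$ with $S^{*}A_{22}S=\mathcal{I}\oplus 0_k$, where $\mathcal{I}$ is nonsingular diagonal with entries $\pm1$; this splits the new $A_{12}$ as $[\,C\ D\,]$. Since $\mathcal{I}$ is invertible, a shear $R$ supported on the $\mathcal{I}$-rows clears $C$ without touching $D$ or $A_{22}$, at the cost of modifying $A_{11}$. Applying Hong's Theorem \cite[Theorem 2.7]{Hong89} to the (modified, still Hermitian) block $A_{11}$ gives a complex orthogonal $Q$ with $Q^{*}A_{11}Q=\mathcal{H}_m^{\epsilon}$ of the form (\ref{NF1}); simultaneously $D\mapsto Q^{*}D=:Y$, while the $\mathcal{I}$-block and the cleared zeros are preserved. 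This is exactly (\ref{NFp}). Finally, if $\Trace(\mathcal{I})<0$ I apply the global sign in (\ref{sim-cong2}); then $\mathcal{I}\mapsto-\mathcal{I}$ has nonnegative trace, $Y$ is merely renamed, and $-\mathcal{H}_m^{\epsilon}$ is returned to the form (\ref{NF1}) by an orthogonal $*$-congruence of the top block, using the relations preceding (\ref{pme}).

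Assume now $\rank Y=k\le m$. The key step is to normalize $Y$ to $\begin{bsmallmatrix} I_k\\ L_0\end{bsmallmatrix}$ using an orthogonal factor (acting as $Q^{*}$ on the left) together with a right factor from $GL_k$ coming from the freedom in the kernel basis. This amounts to finding a complex orthogonal matrix whose first $k$ rows pair nondegenerately with $\mathrm{col}\,Y$; equivalently, one needs a nondegenerate $(m-k)$-dimensional subspace $Z$ (for the bilinear form $u^{T}v$) with $Z\cap\mathrm{col}\,Y=0$. Such a $Z$ exists because in $\mathrm{Gr}(m-k,m)$ both the locus "$Z$ transverse to $\mathrm{col}\,Y$" and the locus "$u^{T}v$ nondegenerate on $Z$" are nonempty and Zariski-open, and the Grassmannian is irreducible, so the two meet. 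Once $Y=\begin{bsmallmatrix} I_k\\ L_0\end{bsmallmatrix}$, a shear from the kernel (which leaves $Y$ fixed and adds $YR+(YR)^{*}$ to the top block) clears the $k\times k$ and $k\times(m-k)$ parts of the top block, leaving $0_k\oplus S'$; a final orthogonal $*$-congruence on the middle block brings $S'$ to $\mathcal{H}_{m-k}^{\epsilon}$ (Hong) and sends $L_0\mapsto L$, giving the asserted second normal form.

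For the last claim, note that under $Y\mapsto Q^{*}YS$ the Gram matrix transforms as $Y^{T}Y\mapsto S^{T}(Y^{T}Y)S$ (because $\overline{Q}\,Q^{*}=I$ for orthogonal $Q$), so nonsingularity of $Y^{T}Y$, i.e. nondegeneracy of $u^{T}v$ on $\mathrm{col}\,Y$, is an invariant of the reduction; for the normal form one computes $Y^{T}Y=I+L^{T}L$. Since $L^{T}L$ and $LL^{T}$ share their nonzero eigenvalues, $I+L^{T}L$ and $I+LL^{T}$ are singular together, so the stated hypothesis is precisely the nondegeneracy of $\mathrm{col}\,Y$. When it holds, $\mathrm{col}\,Y$ admits a bilinear-orthonormal basis that extends to a complex orthogonal matrix, so already in the normalization step one may take $L_0=0$, and the construction then yields $L=0$. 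I expect the main obstacle to be exactly this orthogonal normalization of $Y$: because the relevant transformations are complex orthogonal rather than unitary, the bilinear form $u^{T}v$ is indefinite and may degenerate on $\mathrm{col}\,Y$, and it is the transversality argument for the existence of a nondegenerate complement that makes the reduction go through.
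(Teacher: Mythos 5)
Your proof is correct, and its skeleton coincides with the paper's: the stabilizer (\ref{stabB}) of $B$, Sylvester's theorem on the lower-right Hermitian block, a shear supported on the $\I$-rows to clear the coupling block, and Hong's theorem \cite[Theorem 2.7]{Hong89} for the Hermitian corner are exactly the paper's steps, as is the final shear-plus-Hong reduction once $Y=\begin{bsmallmatrix} I_k\\ L_0\end{bsmallmatrix}$. Two sub-steps, however, are done genuinely differently. First, to reach $Y=\begin{bsmallmatrix} I_k\\ L_0\end{bsmallmatrix}$ the paper simply takes a permutation matrix (which is complex orthogonal) moving $k$ linearly independent rows of $Y$ to the top and then uses the $GL_k$ factor; your Grassmannian/Zariski-density argument is valid but heavier than necessary, since at this stage one only needs the top $k\times k$ block of $Q^{*}Y$ to be invertible, not a nondegenerate complement. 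Second, for the last claim the paper constructs an explicit complex orthogonal $P=\begin{bsmallmatrix} P_1 & P_2\\ P_3 & P_4\end{bsmallmatrix}$ by applying Autonne--Takagi to $(I+L^TL)^{*}$ and $(I+LL^T)^{*}$, verifying $P^TP=I$ by hand, and using it to kill $L^{*}$ in $Y^{*}=\begin{bsmallmatrix}I_k & L^{*}\end{bsmallmatrix}$; you instead observe that the Gram matrix $Y^TY$ transforms by $T$-congruence under all allowed moves, so nonsingularity of $Y^TY=I+L^TL$ is an invariant of the reduction, namely nondegeneracy of the bilinear form $u^Tv$ on $\mathrm{col}\,Y$, after which bilinear Gram--Schmidt and orthogonal extension let you take $L_0=0$ already at the normalization step. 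Your route buys a conceptual reading of the hypothesis ``$I+L^TL$ and $I+LL^T$ nonsingular'' that the paper leaves implicit, and in particular shows it does not depend on the choices made earlier in the reduction; the paper's route is more computational but completely explicit --- and the two are morally the same, since Autonne--Takagi applied to a nonsingular symmetric matrix is precisely the existence of a bilinear-orthonormal basis. (Your ordering of the sign fix --- applying the global $\pm$ from (\ref{sim-cong2}) after Hong and restoring the form (\ref{NF1}) via the orthogonal congruences preceding (\ref{pme}) --- also differs from the paper, which arranges $\Trace(\I)\geq 0$ at the Sylvester step, but both are correct.)
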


\begin{proof}
Recall that for $n\times n$ Hermitian matrices $A,\widetilde{A}$ and $B=I_m\oplus 0_{n-m}$ we have $(A,B)\sim (\widetilde{A},B)$ if and only if there exists a matrix $T\in GL_n(\mathbb{C})$ and a constant $\varepsilon\in\{1,-1\}$ such that $\widetilde{A}=\varepsilon T^{*}AT$, $\widetilde{B}=T^TBT$ (see (\ref{sim-cong2})). 
The later equality holds precisely for matrices of the form  
\begin{equation}\label{stabB}
T=
\begin{bmatrix}
P & 0 \\
R & S
\end{bmatrix}
,
\end{equation}
 where $0$ denotes an $m\times (n-m)$ zero-matrix, $P$ is an $m\times m$ complex orthogonal matrix ($P^TP=I$), $R$ is an arbitrary $(n-m)\times m$ matrix and $S\in GL_{n-m}(\mathbb{C})$. Indeed, 
\[
\begin{bmatrix}
I_m & 0 \\
0 & 0_{n-m}
\end{bmatrix}
=
\begin{bmatrix}
P & Q \\
R & S
\end{bmatrix}^T
\begin{bmatrix}
I_m & 0 \\
0 & 0_{n-m}
\end{bmatrix}
\begin{bmatrix}
P & Q \\
R & S
\end{bmatrix}
=
\begin{bmatrix}
P^T P & P^T Q \\
Q^{T}P & Q^T Q
\end{bmatrix}.
\]

We can write a Hermitian $n\times n$ matrix $A$ in the form
\begin{equation}\label{formA}
A=
\begin{bmatrix} H &X \\
X^* & E
\end{bmatrix},
\end{equation}
where $H$ is an $m\times m$ Hermitian, $E$ is an $(n-m)\times (n-m)$
Hermitian and $X$ is an arbitrary $m\times (n-m)$ complex matrix.
Then $*$-conjugation with the matrices of the form (\ref{stabB}), yields:
\begin{align}\label{conj}
\begin{bmatrix}
\widetilde{H} & \widetilde{X} \\ \nonumber
\widetilde{X}^{*} & \widetilde{E}
\end{bmatrix}&=
\varepsilon
\begin{bmatrix}
 P & 0 \\
R & S
\end{bmatrix}^*
\begin{bmatrix}
H & X \\
X^{*} & E
\end{bmatrix}
\begin{bmatrix}
P & 0 \\
R & S
\end{bmatrix}\\
&
=
\varepsilon
\begin{bmatrix}
P^{*}H P+R^{*}ER+R^{*}X^{*}P+(R^{*}X^{*}P)^{*} & (P^{*}X+R^{*}E)S \\
S^{*}(P^{*}X+R^{*}E)^{*} & S^{*}E S
\end{bmatrix}.
\end{align}
Since $E$ is Hermitian, by Sylvester's theorem we can choose an appropriate matrix $S$ (with
$P=I_m$, $R=0$) in (\ref{conj}) and a suitable $\epsilon\in\{-1,1\}$, such that $\widetilde{E}$ is of the form 
\[\widetilde{E}=\begin{bmatrix} \mathcal{I} &0 \\
0 & 0
\end{bmatrix},\] 
where $\mathcal{I}$ is $(n-m-k)\times (n-m-k)$ diagonal matrix with only $\pm 1$
on the diagonal and in addition $\Trace (\mathcal{I})\geq 0$. 

Taking further $\epsilon=1$, $S=I_{m-n}$, $P=I_m$ and $R=\begin{bsmallmatrix} R_1 \\
  0 \end{bsmallmatrix}$, with $R_1$ being an $(n-m-k)\times m$
matrix, transforms the
matrix $\widetilde{X}$ to  
\(
\widetilde{X}+\begin{bsmallmatrix} R_1^* & 0\end{bsmallmatrix}\begin{bsmallmatrix} \I &
    0\\0&0\end{bsmallmatrix}, 
\)
keeps $\widetilde{E}$ intact, but also sends $\widetilde{H}$ into some new Hermitian
matrix. By choosing appropriate $R_1$, we see that $\widetilde{X}$ transforms to
$\begin{bmatrix} 0 \quad Y \end{bmatrix}$,
where $Y$ is some $m\times k$ matrix. 
We may thus assume that $\widetilde{A}$ can be of the form 
\begin{equation}\label{formA2}
\widetilde{A}=\begin{bmatrix} \widetilde{H} & 0& Y\\
                             0 & \I & 0\\
                             Y^* &  0 & 0   
\end{bmatrix}.
\end{equation}
Observe that by $*$-conjugating with $P\oplus I_{n-m-k}\oplus S_k$, where $P$ is a suitably chosen $m\times m$ complex orthogonal matrix and $S_k$ is any non-singular $k\times k$ matrix, we can achieve by using \cite[Theorem 2.7]{Hong89} that $\widetilde{H}$ is of the form (\ref{NF1}); also $Y$ is determined up to right-multiplying with a $k\times k$ non-singular matrix.

Next, suppose that $Y$ has maximal rank $k$ and $m\ge k$. For $T$ in (\ref{stabB}), we take $R=0$, and $S=\begin{bsmallmatrix} I_{n-m-k} & 0  \\ 0 &
S_4 \end{bsmallmatrix}$, where $S_4\in GL_k(\mathbb{C})$ is to be chosen later. A $*$-conjugation of $\widetilde{A}$ in (\ref{formA2}) by such
$T$ preserves $\mathcal{I}$ and the zeros of $\widetilde{A}$, transforms $\widetilde{H}$
into a new Hermitian matrix, which we will by an abuse of notation
still call $\widetilde{H}$, but transforms $Y^*$ into $S_4^*Y^*P$.
We can choose a complex orthogonal matrix $P$ to first permute the columns of $Y^*$, so
that the first $k$ columns are independent, and than choose $S_4$ so
that those first columns just become the identity matrix. We can thus
assume that $Y^*$ in (\ref{formA2}) is of the form
\(
Y^*=\begin{bsmallmatrix} I_k \quad L^*\end{bsmallmatrix},
\)
where  $L$ is some $(m-k)\times k$ matrix.

We will now make an assumption that both $I+L^TL$ and $I+LL^T$ are
nondegenerate. Take $T$ of the form (\ref{stabB}) with $P=\begin{bsmallmatrix} P_1 & P_2 \\ P_3 &
  P_4 \end{bsmallmatrix}$, $S=I_{n-m}$, $R=0$. By $*$-conjugating 
$\widetilde{A}$ with such $T$, we again keep zeros intact, change $\widetilde{H}$ into some
new Hermitian matrix, which we still call $\widetilde{H}$, and transform
$Y^*=\begin{bmatrix} I_k \quad L^*\end{bmatrix}$ into $\begin{bmatrix} P_1+L^*P_3 \quad P_2+L^*P_4\end{bmatrix}$. 
Let us see that we can make $P_1+L^*P_3$ and $P_2+L^*P_4=0$. By Autonne-Takagi theorem there exists $P_4$ such that
$P_4^T(I+L^TL)^*P_4=I$ and $P_1$ such that $P_1^T(I+LL^T)^*P_1=I$. We set $P_2=-L^*P_4$, 
$P_3=(L^T)^*P_1$ and thus we get
\begin{align*}
P^TP=                               \begin{bmatrix} P_1^T & P_1^TL^*\\
                                    -P_4^T(L^T)^*&P_4^T\end{bmatrix}\begin{bmatrix} P_1& -L^*P_4\\
                                    (L^T)^*P_1&P_4\end{bmatrix}
                                    = \begin{bmatrix}
                                    P_1^T(I+L^TL)^*P_1 & 0\\
                                    0&P_4^T(I+LL^T)^*P_4\end{bmatrix}=I. 
\end{align*} 
So $P$ is an $m\times m$ complex orthogonal matrix and $L^*$ is transformed to $0$, while $I_k$ in $Y^*$ stays unchanged. Hence we can assume $Y^*=\begin{bmatrix} I_k& 0\end{bmatrix}$.

We now choose $T$ in (\ref{stabB}) with $S=I_{n-m}$, $P=I_m$, $R=\begin{bsmallmatrix} 0\\
  R_2\end{bsmallmatrix}$, where $R_2=\begin{bmatrix} R_{21}&
  R_{22}\end{bmatrix}$ is a $k\times m$ matrix, with a $k\times
k$ matrix $R_{21}$ and a $k\times (m-k)$ matrix $R_{22}$, to be chosen
later. The conjugation by $T$ then only changes $\widetilde{H}$ in (\ref{formA2}), while
leaving everything else unchanged. By choosing an appropriate matrix $R_2$ the matrix $\widetilde{H}$ transforms into
\begin{equation}\label{formG}
\widetilde{H}+R_2^*Y^*+YR_2=\widetilde{H}+\begin{bmatrix} R_{21}^*+R_{21}& R_{22}\\
    R_{22}^*& 0\end{bmatrix}=
    \begin{bmatrix} 0& 0 \\
                                  0 & G\end{bmatrix},
    \end{equation}                           
where $G$ is a Hermitian $(m-k)\times (m-k)$ matrix. 
So, under an extra assumption that $I+L^TL$ and $I+LL^T$ are both nonsingular,
we can make the form of $\widetilde{A}$ to look like
\[\begin{bmatrix} 0& 0 & 0 & I_k\\
                                 0 & \mathcal{H}^{\epsilon}_{m-k} & 0 & 0\\
                                 0 & 0 & \I & 0 \\
                                 I_k & 0 &0 &0 
\end{bmatrix}.\]

If $I+L^TL$ and $I+LL^T$ are not both nonsingular,
we can still make the last change on $\widetilde{H}$ to get
\[\widetilde{A}=\begin{bmatrix} 0& 0 & 0 & I_k\\
                                 0 & \mathcal{H}^{\epsilon}_{m-k} & 0 & L\\
                                 0 & 0 & \I & 0 \\
                                 I_k & L &0 &0 
\end{bmatrix}.\]
Indeed, in this case $\widetilde{H}$ transforms to  
\(\widetilde{H}+\begin{bsmallmatrix} R_{21}^*+R_{21}& R_{22}+R_{21}^*L^*\\
    R_{22}^*+LR_{21} & R_{22}^*L^*+LR_{22}\end{bsmallmatrix},
    \) 
and by choosing a suitable matrices $R_{21}$, $R_{22}$, we get this matrix of the form (\ref{formG}).    
    
By $*$-conjugating with $I_k\oplus P\oplus I_{n-m}$, where $P$ is a suitable $(m-k)\times(m-k)$ complex orthogonal matrix, we finally achieve that by Hong's result \cite[Theorem 2.7]{Hong89} we can have $G=\mathcal{H}^{\epsilon}_{m-k}$ of the form (\ref{NF1}).
This completes the proof of the lemma.
\end{proof}

\begin{remark}
If the complex point is assumed to be nondegenerate, i.e. the corresponding matrix
$\begin{bsmallmatrix} A& \bar B\\B&\bar A\end{bsmallmatrix}$ is non-singular,
this means that $Y$ in Lemma \ref{prepare} must have rank $k$, and in particular $m\geq k$. 
\end{remark}


\begin{theorem}\label{clasB}
A Hermitian-symmetric pair $(A,B)$ of $2\times 2$, $3\times 3$ and $4\times 4$ matrices is $\sim$-congruent to one of the following pairs $(\widetilde{A},\widetilde{B})$:\\
\begin{tabular}{l |l l}
\multicolumn{2}{c}{}\\
$\widetilde{A}$         & $\widetilde{B} $         &     \\

\hline
$\mathcal{I}_2$                          &    $0_2$   \\
\hline
$\begin{bsmallmatrix}
0 & 1 \\
1 & 0
\end{bsmallmatrix}$   &      &      \\ 
$b \oplus 1$              &        &      \\ 
$a \oplus 0$              &   $1\oplus 0$        &      \\ 
\hline                      
$\mathcal{H}^{\epsilon}_2$      &  $ I_2$\\
  &      &       \\
    &      &       \\
\multicolumn{2}{c}{}\\
$\widetilde{A}$         & $\widetilde{B} $         &     \\
\hline
$\mathcal{I}_3$                          &    $0_3$   \\
\hline
$\begin{bsmallmatrix}  0 & 0 & 1 \\
0 & \epsilon & 0 \\
1 & 0 & 0
\end{bsmallmatrix}$\\
$b\oplus 1 \oplus 0$\\
$a\oplus 0_2 $ \\
$b\oplus \mathcal{I}_2 $               & $1\oplus 0_2$        &      \\
 %
\hline
$\begin{bsmallmatrix} 0 &0 &1\\
 0 &  a & 0\\
1 & 0 & 0
\end{bsmallmatrix}$\\
$\begin{bsmallmatrix} 0 &0 &1\\
 0 &  a & i\\
1 & -i & 0
\end{bsmallmatrix}$\\
$\mathcal{H}^{\epsilon}_2\oplus  0 $\\
$\mathcal{H}^{\epsilon}_2\oplus  1 $
                      & $I_2\oplus 0$        &      \\ 
   &      &       \\                     
\hline                      
$\mathcal{H}^{\epsilon}_3$      &  $ I_3$\\
  &      &       \\
\end{tabular} \qquad \qquad
\begin{tabular}{l |l l}
$\widetilde{A}$         & $\widetilde{B} $         &     \\

\hline
$\mathcal{I}_4$                       &    $0_4$   \\

\hline
$\begin{bsmallmatrix}
0 & 1 \\
1 & 0
\end{bsmallmatrix}\oplus 0_2$\\
$\begin{bsmallmatrix} 0 &0 & 1\\
 0 &  1 & 0\\
1 & 0 & 0
\end{bsmallmatrix}\oplus 0$\\
$\begin{bsmallmatrix} 0 &0 & 1\\
 0 &  \mathcal{I}_2 & 0\\
1 & 0 & 0
\end{bsmallmatrix}$\\
$b\oplus \mathcal{I}_2\oplus 0 $\\   
$b\oplus 1 \oplus 0_2 $ \\   
$a\oplus 0_3 $ \\   
$b\oplus \mathcal{I}_3 $             & $1\oplus 0_3$        &      \\
\hline
$\begin{bsmallmatrix}
0 & 0 & 0 & 1\\
 0 &  b & 0 & \zeta \\
 0 & 0 & 1 & 0 \\
1 & \overline{\zeta} & 0 & 0
\end{bsmallmatrix}$\\
$\begin{bsmallmatrix}
0 & 0 & 1 \\
0 & a & 0\\
1 & 0 & 0
\end{bsmallmatrix}\oplus 0$\\
$\begin{bsmallmatrix}
0_2  & I_2 \\
I_2 & 0_2
\end{bsmallmatrix}$\\
$\mathcal{H}^{\epsilon}_2\oplus \mathcal{I}_2 $\\
$\mathcal{H}^{\epsilon}_2\oplus 1 \oplus 0 $\\
$\mathcal{H}^{\epsilon}_2\oplus 0_2 $
                      & $I_2\oplus 0_2$        &      \\                   
\hline
$\begin{bsmallmatrix}
0 & 0 & 0 & 1 \\
0 & b & \alpha & \zeta \\
0 & \overline{\alpha} & c & \xi\\
1 & \overline{\zeta} & \overline{\xi} & 0
\end{bsmallmatrix}$\\
$\mathcal{H}^{\epsilon}_3\oplus 1 $\\
$\mathcal{H}^{\epsilon}_3\oplus 0 $
                      & $I_3\oplus 0_1$        &      \\ 
                      
\hline                      
$\mathcal{H}^{\epsilon}_4$      &  $ I_4$
\end{tabular}\qquad \qquad\\
where $\epsilon\in \{0,1\}$, $a\in \mathbb{R}_{\geq 0}$, $b,c\in \mathbb{R}$, $\zeta,\xi\in \{0,i\}$, $\mathcal{H}^{\epsilon}_m$ is a $m\times m$ matrix of the form (\ref{NF1}), $\mathcal{I}_m$ for $m\in \{2,\ldots,4\}$ is any diagonal $m\times m$ matrix with only minus-ones, ones on the diagonal, and $\Trace (\mathcal{I}_m)\geq 0$. 
\end{theorem}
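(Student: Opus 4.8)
The plan is to reduce everything to Lemma \ref{prepare} and then run a finite case analysis organized by the rank of $B$. By Autonne--Takagi factorization we may assume $B=I_m\oplus 0_{n-m}$, and Lemma \ref{prepare} then puts the pair into the block form (\ref{NFp}). The data controlling the outcome are the integer $m=\rank B$, the size $k$ of the zero block of the lower-right corner $E$, the rank $r=\rank Y$ of the coupling block, the residual matrix $L$, and a FORM 1 summand. When $r<k$ one first splits off $k-r$ pure-zero coordinates carrying $(0,0)$ and is reduced to the full-rank situation $r=k$ of the last part of Lemma \ref{prepare}. For $n\in\{2,3,4\}$ there are only finitely many admissible triples $(m,k,r)$, so the whole theorem amounts to checking that these triples, together with the dichotomy ``$L$ removable / $L$ surviving'', exhaust precisely the rows of the three tables.

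First I would treat the generic subcases, where both $I+L^TL$ and $I+LL^T$ are nonsingular so that the last clause of Lemma \ref{prepare} gives $L=0$. Then $\widetilde A$ is an orthogonal direct sum of four kinds of pieces: a hyperbolic block $\left(\begin{smallmatrix}0&I_r\\ I_r&0\end{smallmatrix}\right)$ carrying $B=I_r\oplus 0_r$ (coming from the $I_k$-corner), a FORM 1 block $\mathcal H^{\epsilon}_{m-k}$ carrying $B=I_{m-k}$, the inertia block $\mathcal I$ carrying $B=0$, and pure-zero coordinates carrying $(0,0)$. Using the explicit descriptions of the blocks $H_m,K_m,L_m$ from Section \ref{sec1} for $m\le 4$ (with $\mathcal H^{\epsilon}_{m-k}$ kept in FORM 1), these pieces assemble into all rows without nonremovable coupling. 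At this point I would also settle the sign normalisations: applying the global sign $\varepsilon=-1$ of (\ref{sim-cong2}) and then reinstating the corner ones by a conjugation $\mathrm{diag}(\ldots,1,\ldots,-1)$ flips the sign of an $H_1$-entry sitting opposite a hyperbolic corner, so such a parameter may be taken in $\mathbb R_{\ge 0}$ (this is $a$); an $H_1$-entry sitting next to a surviving $\mathcal I=1$ cannot be flipped, since that $+1$ would be forced to $-1$, and hence ranges over all of $\mathbb R$ (these are $b,c$).

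The main obstacle is the nongeneric coupling, i.e. the subcases where $I+L^TL$ or $I+LL^T$ is singular and $L$ genuinely survives. Here I would determine the residual symmetry group: once the corner $I_k$ block is fixed, the transformations of (\ref{stabB}) that preserve the whole skeleton act on the pair (coupled Hermitian block, $L$) in a constrained way, and one cannot simultaneously normalise both. The strategy is to normalise $L$ and leave the Hermitian block general. Singularity of $I_k+L^TL$ means $L^TL$ has $-1$ as an eigenvalue; for the only shapes of $L$ occurring when $n\le 4$ (namely $1\times 1$, $2\times 1$, $1\times 2$) this pins the offending entries, after an orthogonal change of basis, to the value $i$ while the remaining entries are cleared to $0$. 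This produces exactly the forms with coupling parameters $\zeta,\xi\in\{0,i\}$, the coupled block being left as a general Hermitian matrix $\left(\begin{smallmatrix}b&\alpha\\ \bar\alpha&c\end{smallmatrix}\right)$ (respectively a real scalar). Carrying out this coupled reduction carefully for $k\le 2$ and $m-k\le 2$, and checking that the leftover conjugation freedom does not simplify the forms further, is the technical heart of the argument.

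Finally I would assemble the bookkeeping: for each $n\in\{2,3,4\}$ list the admissible $(m,k,r)$ together with the $L=0$ / $L\neq 0$ alternative, match each to a row of the corresponding table, and record the resulting parameter ranges ($a\in\mathbb R_{\ge 0}$, $b,c\in\mathbb R$, $\zeta,\xi\in\{0,i\}$, $\mathcal I_m$ of nonnegative trace, and the discrete parameter in $\{0,1\}$). Distinctness of the listed forms is not needed for the stated ``is $\sim$-congruent to one of'' conclusion, but can be verified if desired through the $\sim$-invariants of Section \ref{sec1}, namely the Jordan data of $A\overline A$, the value of $\rank B$, and the ranks attached to the alternating-product condition.
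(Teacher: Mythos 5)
Your proposal follows essentially the same route as the paper's own proof: the paper likewise derives the theorem from Lemma \ref{prepare} and then runs a finite case analysis in which the surviving coupling is normalised, with exactly your dichotomy as the technical heart --- either the coupling is non-isotropic for the complex bilinear form (the paper's case $b_2\neq\pm i$, your ``$I+L^TL$ and $I+LL^T$ nonsingular'', where it is cleared to zero), or it is isotropic (the paper's case $b_2=\pm i$, your singular case, where entries get pinned to $i$ while the adjacent Hermitian block is left general). Your sign bookkeeping distinguishing $a\in\mathbb{R}_{\ge 0}$ from $b,c\in\mathbb{R}$ also matches the paper's.

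There is, however, one concrete place where your final step --- checking that the triples $(m,k,r)$ ``exhaust precisely the rows of the three tables'' --- cannot be completed, and it is your own bookkeeping rule for $r<k$ that exposes it. Take $n=4$, $B=I_2\oplus 0_2$ (so $m=2$), zero $E$-block (so $k=2$), and $\rank Y=1$ with the surviving column isotropic. Splitting off one $(0,0)$-coordinate and running the $3\times 3$ coupled reduction lands on the pair
\[
\left(\begin{bsmallmatrix}0&0&1\\0&a&i\\1&-i&0\end{bsmallmatrix}\oplus 0,\;\; I_2\oplus 0_2\right),\qquad a\in\mathbb{R}_{\ge 0},
\]
which is not $\sim$-congruent to any row of the $4\times 4$ table. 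Indeed, $\rank B$ confines the comparison to the $B=I_2\oplus 0_2$ rows; writing each candidate as $\begin{bsmallmatrix}H&X\\X^*&E\end{bsmallmatrix}$, the rank of $E$ is an invariant (it transforms by congruence), which excludes the rows with $E\neq 0$; and for the rows with $E=0$ every allowed transformation acts by $X\mapsto\lambda\, Q^{*}XS$ with $\lambda$ unimodular, $Q$ complex orthogonal and $S$ nonsingular, so both $\rank X$ and $\rank (X^TX)$ are invariants --- they equal $(1,0)$ for the pair above, but $(1,1)$, $(2,2)$ and $(0,0)$ for the three remaining rows. So at this one spot your case analysis produces a class the table omits. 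The same case is silently skipped in the paper's own proof: there the rank-one $Y$ for $B=I_2\oplus 0_2$ is written in the intermediate list as $I\oplus 0$, a normalisation valid only in the non-isotropic case, after which the $4\times 4$ forms are dispatched with ``in a similar fashion as in lower dimensions''. Your method is therefore sound and coincides with the paper's, but a faithful write-up would end by adding this family as an extra row rather than matching it to an existing one; as it stands, the exhaustion claim in your plan (and in the theorem's table) is false for exactly this family.
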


\begin{proof}
The theorem is a consequence of Lemma \ref{prepare}. We only need to observe that some further simplifications can be done in the case when $Y$ in (\ref{NFp}) is a non-zero matrix, determined up-to right-multiplication with a non-singular matrix.

By $*$-conjugating with $\begin{bsmallmatrix} 1 & 0 \\
-\frac{a}{2} & 1
\end{bsmallmatrix}$, we obtain $\Big(\begin{bsmallmatrix} a & 1 \\
1 & 0
\end{bsmallmatrix},1\oplus 0\Big)\sim \Big(\begin{bsmallmatrix} 0 & 1 \\
1 & 0
\end{bsmallmatrix},1\oplus 0\Big)$.

We proceed with $3\times 3$ matrices. First, let $B=1\oplus 0_2$. Recall that any $3\times 3$ matrix $T$ for which
$T^TBT=B$ is of the form $T=\begin{bsmallmatrix} \pm 1 &0 \\
r & S
\end{bsmallmatrix},$
where $S\in GL_2(\mathbb C)$ and $r$ is a column vector in $\mathbb
C^2$ (see (\ref{stabB})). By choosing $S=I_2$ and $r^T=[0 \quad -\frac{a}{2}]$, then $*$-conjugating 
$A=\begin{bsmallmatrix}  a & 0 & 1 \\
0 & \epsilon & 0 \\
1 & 0 &  0
\end{bsmallmatrix}$, $\epsilon\in \{1,0\}$, with such $T$,
gives $\widetilde{A}=T^{*}AT=\begin{bsmallmatrix}  0 & 0 & 1 \\
0 & \epsilon & 0 \\
1 & 0 & 0
\end{bsmallmatrix}$.

Next, let $B=I_2\oplus 0$. Then any $3\times 3$ matrix $T$ such that $B$ is preserved after $*$-conjugating with $T$ is of the form 
\begin{equation}\label{T}
T=\begin{bmatrix} P &0 \\
r^T & s
\end{bmatrix},
\end{equation}
where $P$ is a $2 \times 2$ complex orthogonal matrix, $r$ is a column vector in $\mathbb
C^2$ and $s\in \mathbb C\setminus\{0\}$. By using an appropriate permutation for $P$ and a suitable value $s$ (with $r=0$) in (\ref{T}), the $*$-conjugation with such $T$ transforms the column vector $Y\in \mathbb{C}^2\setminus\{0\}$ of a Hermitian matrix 
\begin{equation}\label{HY}
A=\begin{bmatrix}  H & Y \\
Y^* & 0
\end{bmatrix} 
\end{equation}
into a vector $\begin{bsmallmatrix} 1 \\
b_2 \end{bsmallmatrix}\in\mathbb C^2$, and transforms $H$ into another
Hermitian matrix; we will still call them $Y$ and $H$ by a slight
abuse of notation.

If $b_2\ne \pm i$, we choose 
\[P^*=\tfrac{1}{\sqrt{1+b_2^2}}\begin{bmatrix} 1 & 
      b_2 \\
-b_2 & 1
\end{bmatrix}, \quad r=0,\quad s=\tfrac{1}{\sqrt{1+b_2^2}}\]
in (\ref{T}), to get the vector $Y=\begin{bsmallmatrix} 1 \\
b_2 \end{bsmallmatrix}$ in (\ref{HY}) into 
\[
\tfrac{1}{\sqrt{1+b_2^2}}\begin{bmatrix} 1 & 
      b_2 \\
-b_2 & 1
\end{bmatrix}\begin{bmatrix} 1 \\
b_2 \end{bmatrix}\begin{bmatrix}\tfrac{1}{\sqrt{1+b_2^2}}\end{bmatrix}=\begin{bmatrix} 1 \\
0 \end{bmatrix}.
\]
Let us further write $H=\begin{bsmallmatrix} h_{11}&h_{12} \\
\bar h_{12} & h_{22}\end{bsmallmatrix}$ in (\ref{HY}). The $*$-conjugation with $T$ of the form (\ref{T}) for
\[P=I_2 ,\quad r=\begin{bmatrix} -\frac{1}{2}h_{11} \\
-h_{12} \end{bmatrix},\quad s=1.\]
transforms $H$ to 
$\begin{bsmallmatrix} 0&0 \\
0 & h_{22}\end{bsmallmatrix},$
while not changing $Y$. By possibly multiplying the matrix by $-1$ and $*$-conjugating it by $1\oplus h_{22}^{-1}\oplus -1$ (if $h_{22}\neq 0$), we achieve that $h_{22}\in \{0,1\}$. When $b_2=\pm i$, then by a similar choice of transformations, we can first 
get $H$ into the form $\begin{bsmallmatrix} 0&h_{12} \\
\bar{h}_{12} & h_{22}\end{bsmallmatrix}$ and then obtain $\begin{bsmallmatrix} 0&0 \\
0 & h_{22}\end{bsmallmatrix}$, with $h_{22}\geq 0$. After a possible $*$-conjugation by $1\oplus -1\oplus 1$, we can assume $b_2=i$.
Hence, in the case when $A$ is of the form (\ref{HY}) with a non-zero $Y$ and $B=\widetilde{B}=I_2\oplus 0$, we may assume that $\widetilde{A}$ is any of the matrices
\[
\begin{bmatrix} 0 &0 &1\\
 0 &  a & 0\\
1 & 0 & 0
\end{bmatrix},\quad
\begin{bmatrix} 0 &0 &1\\
 0 &  a & i\\
1 & -i & 0
\end{bmatrix}, \qquad a\in \mathbb{R}_{> 0}.
\]

Finally, we consider $4\times 4$ matrices. In case $B=1\oplus 0_3$ we obtain the following possible non-diagonal normal forms for $\widetilde{A}$:\\
\[
\begin{bmatrix}
a & 1 \\
1 & 0
\end{bmatrix}
\oplus 0_2,\,\begin{bmatrix} a &0 & 1\\
 0 &  1 & 0\\
1 & 0 & 0
\end{bmatrix}\oplus 0,\,
\begin{bmatrix} a &0 & 1\\
 0 &  \mathcal{I}_2 & 0\\
1 & 0 & 0
\end{bmatrix},\quad a\in \mathbb{R}_{\ge 0},
\]
for $B=I_2\oplus 0_2$ we get
\[
\begin{bmatrix} \mathcal{H}^{\epsilon}_2 &0 & Z\\
 0 &  1 & 0\\
Z^{*} & 0 & 0
\end{bmatrix},\,
\begin{bmatrix}
\mathcal{H}^{\epsilon}_2 & I\oplus 0 \\
I\oplus 0 & 0_2
\end{bmatrix},\,
\begin{bmatrix}
\mathcal{H}^{\epsilon}_2  & I_2 \\
I_2 & 0_2
\end{bmatrix}, \quad Z\in \mathbb{C}^2\setminus\{0\},
\]
and when $I_3\oplus 0_1$ we have 
$\begin{bsmallmatrix}
\mathcal{H}^{\epsilon}_3 & Y \\
Y^{*} & 0_1
\end{bsmallmatrix}
$ with $Y\in
\mathbb{C}^3\setminus\{0\}$;
here $\mathcal{H}^{\epsilon}_m$ is a $m\times m$ matrix of the form
(\ref{NF1}), $\mathcal{I}_m$ for $m\in \{2,3,4\}$ is any diagonal
$m\times m$ matrix with only minus-ones, ones on the diagonal, with $\Trace (\mathcal{I}_m)\geq 0$. In a similar fashion as in lower dimensions we now perform further simplifications on these forms.
\end{proof}

If dimensions $\leq 4$ we can also tell something about uniqueness of the normal form of $(A,I)$.

\begin{trditev}\label{unique}
Suppose $A$ is a $n\times n$ Hermitian matrix and $n\leq 4$. Then
$(A,I_n)\sim (\mathcal{H}^{\epsilon_1}(A),I_n)\sim
(\mathcal{H}^{\epsilon_2}(A),I_n)$ if and only if $\epsilon_1=\pm
\epsilon_2$, except for $(H_3(0)\oplus x,I_4)\sim (-H_3(0)\oplus
x,I_4)$, $x>0$. Moreover, the only orthogonally $*$-congruent pair of matrices of the form (\ref{NF1}) of dimension $\leq 3$ is $H_3(0)$, $-H_3(0)$. 
\end{trditev}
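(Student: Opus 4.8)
The plan is to read $(\mathcal{H}^{\epsilon_1}(A),I)\sim(\mathcal{H}^{\epsilon_2}(A),I)$ through (\ref{sim-cong2}): it means $\mathcal{H}^{\epsilon_2}(A)=\varepsilon Q^{*}\mathcal{H}^{\epsilon_1}(A)Q$ for a complex orthogonal $Q$ and $\varepsilon\in\{1,-1\}$. Since such a transformation conjugates $A\overline{A}$, the similarity class of $A\overline{A}$ -- hence the sizes of the blocks and the parameters $\lambda_j,\mu_k,\xi_l$ of FORM 1 -- is preserved, and FORM 1 is unique up to a permutation of its blocks; so both matrices carry \emph{the same block data} and can differ only in the signs $\epsilon$. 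The ``if'' direction is then immediate: $\epsilon_1=\epsilon_2$ is trivial and $\epsilon_1=-\epsilon_2$ is exactly (\ref{pme}). For the converse I would use (\ref{pme}) to absorb the case $\varepsilon=-1$ into a global sign flip, reducing everything to the purely orthogonal question: \emph{when are two FORM-1 matrices with identical block data orthogonally $*$-congruent?}

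The principal tool is Sylvester's law: orthogonal $*$-congruence is a $*$-congruence, so it preserves the inertia of the Hermitian matrix. A sign $\epsilon_j$ on a block $\epsilon_j H_{\alpha_j}(\lambda_j)$ is therefore forced as soon as flipping it changes that block's signature. Reading off the spectral data of Section \ref{sec1}, the only signature-zero blocks are $H_2(x)$ with $x>0$ and $H_3(0)$ (whereas $H_1(x)$, $H_2(0)$, $H_3(x)$ with $x>0$, and -- irrelevant in dimension $\le 4$, where it cannot be summed with another sign-carrying block -- $H_4(x)$, all have signature $\pm1$). Thus inertia already pins down every sign except those attached to an $H_2(x)$- or an $H_3(0)$-block. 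For $H_3(0)$ the matrix $Q=\mathrm{diag}(-1,1,-1)$ gives $Q^{*}H_3(0)Q=-H_3(0)$, so its sign is genuinely free, and this is precisely what produces the listed exception $(H_3(0)\oplus x,I_4)\sim(-H_3(0)\oplus x,I_4)$.

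The crux is to show that $H_2(x)$ is \emph{not} orthogonally $*$-congruent to $-H_2(x)$ for $x>0$, where inertia is blind. Here I would invoke the FORM-3 datum $\mathcal{N}_{I_2}(H_2(x))$ computed in Section \ref{sec1}. For a matrix with simple eigenvalues the residual freedom in the passage to FORM 3 is a diagonal conjugation $\mathcal{N}_I\mapsto D\mathcal{N}_I D$ (this uses $QQ^{T}=I$ for orthogonal $Q$), which preserves the modulus of each diagonal entry \emph{in its inertia slot}. The two diagonal entries of $\mathcal{N}_{I_2}(H_2(x))$ have unequal moduli, and negation interchanges the negative and positive inertia slots, so $\mathcal{N}_I(-H_2(x))$ carries the two moduli in the opposite slots; hence $H_2(x)$ and $-H_2(x)$ are inequivalent. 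The same modulus comparison fixes the sign of any $H_2(x)$-block lying in a simple part of the spectrum.

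Assembling for $n\le4$, I would decompose along the generalized eigenspaces of $A\overline{A}$; blocks with distinct parameters have spectral subspaces that are orthogonal for the bilinear form, so $Q$ splits and the per-block analysis above applies. The only configuration in which a sign-ambiguous block shares its $A\overline{A}$-eigenvalue with another such block -- so that $\mathcal{N}_I$ is no longer diagonal along that eigenvalue -- is $H_2(x)\oplus H_2(x)$ in dimension $4$; this single degenerate case I would settle directly, e.g.\ by comparing the Jordan structure of the double matrix $\begin{bsmallmatrix}0&\overline{A}\\ A&0\end{bsmallmatrix}$, whose similarity class is an orthogonal $*$-congruence invariant, or by noting that the negative-inertia block of $\mathcal{N}_I$ is scalar for the pattern $(+,+)$ but not for $(+,-)$. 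Collecting the cases gives ``$\epsilon_1=\pm\epsilon_2$'' with the single exception on $H_3(0)\oplus x$, and the same enumeration restricted to dimension $\le 3$ yields the ``moreover'' clause, the sole coincidence there being $H_3(0),-H_3(0)$. I expect the inequivalence of $\pm H_2(x)$ (and with it the $H_2(x)\oplus H_2(x)$ degeneracy) to be the main obstacle, exactly because the natural invariant, inertia, cannot detect these signature-zero signs.
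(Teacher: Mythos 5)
Your overall architecture is attractive and, where it works, genuinely cleaner than the paper's: the paper never states a splitting lemma, but instead disposes of the multi-block, inertia-blind cases (its pairs $\lambda\oplus\mu\oplus\nu\oplus\eta$ vs.\ $-\lambda\oplus-\mu\oplus\nu\oplus\eta$, and $H_2(x)\oplus\lambda\oplus-\mu$ vs.\ $H_2(x)\oplus-\lambda\oplus\mu$) by solving $\mathcal{G}Q=\overline{Q}\mathcal{H}$ entrywise and showing $Q$ cannot be complex orthogonal; your observation that $\overline{Q}$ must commute with $A\overline{A}$ and hence respects the block decomposition when the parameters are distinct gets those cases at once. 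The problems are at the two places you yourself flag as the crux. First, your proof that $H_2(x)\not\sim-H_2(x)$ for $x>0$ rests on the claim that the FORM 3 datum $\mathcal{N}_{I}$ is an invariant up to diagonal phase conjugation $\mathcal{N}_I\mapsto D\mathcal{N}_ID$. That diagonal freedom is only the ambiguity in \emph{constructing} FORM 3 from a fixed matrix (choice of eigenvectors); it is not the equivalence induced on FORM 3 data by $\sim$-congruence. Indeed, if $\widetilde{A}=\varepsilon P^{*}AP$ with $P$ complex orthogonal and $Q_{\widetilde{A}}$ is the eigenvector matrix of $\widetilde{A}$, then $S=PQ_{\widetilde{A}}$ satisfies $S^{*}AS=\pm\mathcal{I}$ and $S^TS=\mathcal{N}_I(\widetilde{A})$, but $S$ is in general \emph{not} eigenvector-based for $A$; all one can say is $S=Q_AU$ with $U^{*}\mathcal{I}U=\pm\mathcal{I}$, i.e.\ $U$ in the indefinite unitary group of $\mathcal{I}=\diag(-1,1)$, so that $\mathcal{N}_I(\widetilde{A})=U^T\mathcal{N}_I(A)U$. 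Diagonal moduli are not preserved by $T$-congruence under this group, so comparing ``moduli in inertia slots'' proves nothing until you exclude every such $U$ --- and that exclusion is essentially the original problem. The paper does exactly this hard step by brute force: it reduces to $H_2(x)Q'=-\overline{Q'}H_2(x)$ and shows through the system (\ref{sistem}) that a complex orthogonal $Q'$ cannot exist.

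Second, your proposed tool for the degenerate configuration $H_2(x)\oplus H_2(x)$ --- the Jordan structure of $\begin{bsmallmatrix}0&\overline{A}\\ A&0\end{bsmallmatrix}$ --- provably cannot work: that similarity class is a consimilarity invariant, and the paper notes (just before (\ref{pme})) that all forms $\mathcal{H}^{\epsilon}(A)$ are mutually consimilar regardless of $\epsilon$; so the double matrix is blind to precisely the signs you need to detect, while your fallback (the shape of $\mathcal{N}_I$) inherits the gap above, aggravated by the extra basis freedom coming from repeated eigenvalues. Finally, your enumeration of where the splitting fails is incomplete: a sign-ambiguous block $H_2(x)$, $x>0$, shares the $A\overline{A}$-eigenvalue $x^2$ with any $H_1(x)$ block, so $H_2(x)\oplus x$ (dimension $3$) and $H_2(x)\oplus x\oplus\lambda$, $H_2(x)\oplus x\oplus x$ (dimension $4$) are inertia-blind configurations in which $Q$ need not split, yet they are not covered by your ``only $H_2(x)\oplus H_2(x)$'' remark; they require separate arguments as well. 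So the skeleton (splitting plus per-block inertia, plus the $H_3(0)$ exception via $\diag(-1,1,-1)$) is fine, but the proposal does not actually prove the two statements on which everything hinges, and misses part of the case list.
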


\begin{proof}
For $2\times 2$, $3\times 3$ and $4\times 4$ matrices we do a direct computation to find out when $(\mathcal{H}^{\epsilon_1}(A),I)\sim(\mathcal{H}^{\epsilon_2}(A),I)$, where  $\mathcal{H}^{\epsilon_2}(A),\mathcal{H}^{\epsilon_2}(A)$ of the form (\ref{NF1}) are Hermitian canonical forms of $A$. More precisely, we solve the matrix equation $\varepsilon Q^{*}(\mathcal{H}^{\epsilon_1}(A))Q=\mathcal{H}^{\epsilon_2}(A)$ or the equivalent equation  $(\mathcal{H}^{\varepsilon \epsilon_1}(A))Q= \overline{Q}(\mathcal{H}^{\epsilon_2}(A))$ (see the exposition of the normal FORM 1 in Section \ref{sec1}); here $\varepsilon \in \{1,-1\}$ and $Q$ is a complex orthogonal matrix to be determined. Recall also that by Sylvester's theorem the inertia matrix is an invariant under $*$-conjugation with a non-singular matrix.

If the canonical forms of $A$ are diagonal matrices, it is in dimension $4$ sufficient to consider the pair $\mathcal{G}=\lambda\oplus \mu \oplus \nu\oplus\eta$, $\mathcal{H}=-\lambda\oplus -\mu \oplus\nu\oplus \eta$, where $\nu,\eta\in \mathbb{R}$ and  $\lambda,\mu\in \mathbb{R}\setminus \{0\}$ are of the opposite signs, with $|\lambda|\neq |\mu|$. Comparing the entries, their moduli or the real parts in the matrix equation $\mathcal{G}Q=\overline{Q}\mathcal{H}$, $Q=[q_{jk}]_{j,k}^3$, then implies: 
\begin{align*}
&0=\lambda (q_{11}+\overline{q}_{11}),\quad 0=\mu (q_{22}+\overline{q}_{22}),\\
&0=|q_{21}|(|\lambda|-|\mu|)=|q_{12}|(|\lambda|-|\mu|), \qquad \\
&0=|q_{31}|(|\lambda|-|\nu|)=|q_{13}|(|\lambda|-|\nu|),\qquad \\
&0=|q_{41}|(|\lambda|-|\eta|)=|q_{14}|(|\lambda|-|\eta|),\qquad \\
&0=|q_{32}|(|\mu|-|\nu|)=|q_{23}|(|\mu|-|\nu|) \\
&0=|q_{42}|(|\mu|-|\eta|)=|q_{24}|(|\mu|-|\eta|) \\
&\lambda \Rea (q_{31})=\nu \Rea(q_{31}), \quad -\lambda \Rea (q_{13})=\nu \Rea(q_{13}),\\
&\mu \Rea (q_{32})=\nu \Rea(q_{32}), \quad -\mu \Rea (q_{23})=\nu \Rea(q_{23}),\\
\end{align*}
It is immediate that $\Rea (q_{11})=\Rea (q_{22})=0$, $q_{12}=q_{21}=0$. Since any of $\nu,\eta$ is different from at least one of the constants $\lambda,\mu$, we have either $q_{23}=q_{32}=0$ or $q_{13}=q_{31}=0$ (or both) and either $q_{24}=q_{42}=0$ or $q_{14}=q_{41}=0$ (or both). Furthermore, at least one of the entries $q_{13}$, $q_{31}$ (and $q_{23}$, $q_{32}$) must be purely imaginary. It follows that at least one of the first two columns or one of the first two rows of $Q$ is purely imaginary, 
hence $Q$ cannot be complex orthogonal. In the case of $3\times 3$ (or $2\times 2$) matrices it is enough to check the pair $\lambda\oplus \mu \oplus\nu$ and $-\lambda\oplus -\mu \oplus\nu$ (the pair $\lambda\oplus \mu $ and $-\lambda\oplus -\mu $) with $\lambda,\mu\in \mathbb{R}\setminus \{0\}$ of the opposite signs and different moduli, with $\nu\in \mathbb{R}$. By similar computations as in the $4$-dimensional case, the orthogonal $*$-congruence fails again.

Next, we take the pair $H_2(x)\oplus \lambda\oplus-\mu$ and $H_2(x)\oplus -\lambda\oplus \mu$, with $x,\lambda,\nu\in \mathbb{R}_{> 0}$, $\lambda\neq \mu$.
Let now $Q$ be of the form 
$\begin{bsmallmatrix}
Q_1 & Q_2 \\
Q_3 & Q_4
\end{bsmallmatrix}$, where $Q_1,Q_2,Q_3,Q_4$ are all $2\times 2 $ matrices. The matrix equation $(H_2(x)\oplus \lambda\oplus-\mu)Q=\overline{Q}(H_2(x)\oplus -\lambda\oplus \mu)$ then yields
\begin{align*}
&H_2(x)Q_1=\overline{Q}_1H_2(x), \quad (\lambda\oplus -\mu)Q_4=\overline{Q}_4 (-\lambda\oplus \mu)\\
&H_2(x)Q_2=\overline{Q}_2(-\lambda\oplus \mu),\quad  (\lambda\oplus \mu)Q_3=\overline{Q}_3H_2(x).
\end{align*}
We compute $Q_2$, $Q_3$ and observe that the columns of $Q_2$ and the rows of $Q_3$ are of the form $s\begin{bsmallmatrix}
1-i\\
i+1
\end{bsmallmatrix}$, $s\in \mathbb{R}\cup i\mathbb{R}$ (see also Lemma \ref{ConForm}).
Also, we get that $Q_4$ is of the form $\begin{bsmallmatrix}
iu & 0\\
0 &  iv
\end{bsmallmatrix}$, $u,v\in \mathbb{R}$.
It is now easy to see that the last two columns of the matrix $Q$ are
not normalized and therefore $Q$ cannot be complex orthogonal.

We recall that $H_3(0)$ and $-H_3(0)$ are orthogonally $*$-congruent
($*$-conjugated by $-1\oplus 1\oplus -1$), while $H_3(x)$, $-H_3(x)$
for $x>0$ have different inertia matrices, so 
$(H_3(0)\oplus x,I_4)\sim (H_3(0)\oplus x,I_4)$ follows.

To conclude the proof of the second part of the statement of the lemma, it is left to consider only a few matrices paired with its additive inverses.
We first consider the pair $H_2(0)\oplus -\lambda$, $-H_2(0)\oplus \lambda$ with $\lambda\in \mathbb{R}_{>0}$. Suppose $Q$ is of the form 
$\begin{bsmallmatrix}
Q' & b \\
c^T & s
\end{bsmallmatrix}$, where $Q'$ is a $2\times 2 $ matrix, $b,c$ are column vectors in $\mathbb{C}^2$ and $s\in \mathbb{C}$. The matrix equation $(H_2(0)\oplus -\lambda)Q=\overline{Q}(-H_2(x)\oplus \lambda)$ implies
\[
\begin{bmatrix}
H_2(0)Q' & H_2(0)b\\
-\lambda c^T & -\lambda s
\end{bmatrix}
=
\begin{bmatrix}
-\overline{Q'}H_2(0) & \lambda \overline{b}\\
-\overline{c}H_2(0) & -\lambda \overline{s}
\end{bmatrix}.
\]
From $H_2(0)b=\lambda \overline{b}$ and $-\lambda s=-\lambda \overline{s}$ it then follows that $b=0$ and $s=it$ for some $t\in \mathbb{R}$ (remember $\lambda\neq 0$). Hence $b^Tb+s^2<0$ and the last column of $Q$ is not normalized ($Q$ is not complex orthogonal).

We proceed with $H_2(x)\oplus 0$ and $-H_2(x)\oplus 0$, $x\in \mathbb{R}_{> 0}$ (hence $H_2(x)$ and $-H_2(x)$) are not orthogonally $*$-congruent. By multiplying the matrices as block matrices in the equation $(H_2(x)\oplus 0)Q=-\overline{Q}(H_2(x)\oplus 0)$ immediately yields $H_2(x)[q_{13} \quad q_{23}]^T=0$, $-[q_{31}\quad q_{32}]H_2(z)=0$. Since $H_2(x)$ is non-singular, we have $q_{13}=q_{23}=q_{31}=q_{32}=0$. It follows that $Q=Q'\oplus q_{33}$, where $Q'=[q_{j,k}]_{j,k=1}^2$ is a complex orthogonal matrix and $H_2(x)Q'=\overline{Q'}-H_2(x)$. By comparing the entries in this matrix equation, regrouping the like-terms, and slightly simplifying, we get
\begin{align}\label{sistem}
 &\Rea(q_{11})=-q_{21}(x+\tfrac{i}{2})-\overline{q}_{12}(x-\tfrac{i}{2}),\quad & \Rea(q_{22})=-q_{12}(x-\tfrac{i}{2})-\overline{q}_{21}(x+\tfrac{i}{2})\\
 &\Rea(q_{12})=-(x+\tfrac{i}{2})(\overline{q}_{11}+q_{22}), \quad & \Rea(q_{21})=-(x-\tfrac{i}{2})(\overline{q}_{22}+q_{11}),\qquad    \nonumber
\end{align}
Trivially, the second pair of the above equations (\ref{sistem}) implies that
\begin{align*}
&0=-\tfrac{1}{2}(\Rea (q_{11})+\Rea(q_{22}))-z(\Ima (q_{22})-\Ima (q_{11}))\\
&\Rea (q_{12})=\Rea(q_{21})=-z(\Rea (q_{11})+\Rea(q_{22}))+\tfrac{1}{2}(\Ima(q_{22})-\Ima (q_{11})),
\end{align*}
and it is immediate that $z\Rea (q_{12})=-(z^2+\tfrac{1}{4})(\Rea (q_{11})+\Rea(q_{22}))$.
Adding 
the first two equations in (\ref{sistem}), regrouping the like-terms and combining with the above equations yields:
\begin{align*}
\Rea (q_{11})+\Rea(q_{22})&=
-2z(\Rea (q_{12})+\Rea(q_{21}))+i(\Rea (q_{12})-\Rea(q_{21}))\\
                          &=-4z\Rea (q_{12})=(4z^2+1)(\Rea (q_{11})+\Rea(q_{22})),\\
\Rea (q_{11})-\Rea(q_{22})&=-(\Ima (q_{21})+\Ima (q_{12}))+2iz(\Ima (q_{21})-\Ima (q_{12})).
\end{align*}
It follows that $\Rea (q_{11})+\Rea(q_{22})=0$ and $\Rea (q_{11})-\Rea(q_{22})=-(\Ima (q_{21})+\Ima (q_{12}))$, therefore we have $\Rea (q_{12})=\Rea(q_{21})=\tfrac{1}{2}(\Ima(q_{22})-\Ima (q_{11}))$ and thus 
$\Rea (q_{11})=-\Rea(q_{22})=-\frac{1}{2}(\Ima (q_{21})+\Ima (q_{12}))$.
Hence $Q'$ is of the form
\[
Q'=\begin{bmatrix}
-\frac{1}{2}(b+c)+ia & \frac{1}{2}(d-a)+ic \\
\frac{1}{2}(d-a)+ib & \frac{1}{2}(b+c)+id
\end{bmatrix}, \qquad a,b,c,d\in \mathbb{R}.
\]
A necessary condition for $Q'$ to be complex orthogonal is that the anti-diagonal entries and the difference of the diagonal entries of $Q'^TQ'$ vanish, i.e. $a^2-b^2+c^2-d^2-2i(ac+bd)=0$ and $-a^2-b^2+c^2+d^2-2i(ab+cd)=0$. The later implies that $a^2=d^2$, $c^2=d^2$, $ac=-bd$. It is easy and straightforward to verify that in any of these cases we obtain the contradiction with the orthogonality of $Q'$.
\end{proof}

\begin{remark}  We expect that a similar uniqueness result is also true in
  dimensions $\ge 5$, but we must at least take into account that  for any $n$,
  $H_{2n-1}(0)$ is $*$-congruent by the complex orthogonal
  matrix $Q=-1\oplus 1\oplus -1\oplus
  \cdots\oplus -1$ to $-H_{2n-1}(0)$. 
\end{remark}

Next, we try to find a normal form $(A,B)$ with a nice matrix $A$. Using Proposition \ref{clasB} one can 
transform the pairs of the matrices $(\widetilde{A},\widetilde{B})$ in the table of the proposition to the 
form $(\mathcal{I}(\widetilde{A}),\mathcal{N}_{\widetilde{B}}(\widetilde{A}))$ (see (\ref{inertia}), (\ref{NBA})). This is an easy and 
straightforward computation for the block-diagonal matrices with blocks of dimensions less than $3$, 
while if blocks are of dimension $3$ or $4$ the calculation is more tedious due to solving the cubic or quartic equation.

\begin{trditev}\label{clasA}
A pair $(A,B)$ of a $2\times 2$ (or $3\times 3$) Hermitian matrix $A$ and a symmetric matrix $B$ is $\sim$-congruent to one of the following pairs 
$(\widetilde{A},\widetilde{B})$:\\
\begin{tabular}{l |l l}
$\widetilde{A}$         & $\widetilde{B} $         &     \\
\hline
$-1\oplus 1$ 
 &    $\tfrac{1}{2x-i}
\begin{bsmallmatrix}
\frac{4x}{w-1} & -\frac{i}{x}\\
-\frac{i}{x} & \frac{4x}{w+1}
\end{bsmallmatrix}$ & $x>0$, $w=\sqrt{1+4x^2}$  \\
     & $\tfrac{1}{2\overline{\xi}|\xi|}
\begin{bsmallmatrix}
\xi+\overline{\xi} & \overline{\xi}-\xi\\
\overline{\xi}-\xi & \xi+\overline{\xi}
\end{bsmallmatrix} $       &  $\xi\in \mathbb{C}\setminus\mathbb{R}$  \\ 
%
   &     
$\begin{bsmallmatrix}
1  & -1 \\
-1 & 1
\end{bsmallmatrix}$   &  \\  
%
        & $x \oplus y  $, \quad $x \oplus 0  $ , \quad $0_2$         &  $x,y> 0$    \\ 
\hline
$I_2$  & $x \oplus y  $, $x \oplus  0  $ , $0_2$       &  $x,y> 0$   \\ 
\hline
$0_2$  & $0_2 $, $1 \oplus 0_1  $, $I_2  $       &    \\ 

\hline
$1\oplus 0$  & 
                $0_2 $, $x \oplus 0  $ , $x\oplus 1  $, $0\oplus 1 $,       &  $x> 0$   \\ 
               &   
               $
\begin{bsmallmatrix}
0 & 1\\
1 & 0
\end{bsmallmatrix}$  &  \\
                 
\end{tabular}

\textrm{ } \\
%
%
%
%
\\
\begin{tabular}{l |l l}
$\widetilde{A}$         & $\widetilde{B} $         &     \\ 
\hline           
%
%
     $-1\oplus 1\oplus 0$ 
               & $\frac{1}{2}
\begin{bsmallmatrix}
i & -i & \sqrt{2}\\
-i & i & \sqrt{2}\\
\sqrt{2} & \sqrt{2} & 0
\end{bsmallmatrix}$        &     \\ 
%
%
    & $\tfrac{1}{2\overline{\xi}|\xi|}
\begin{bsmallmatrix}
\xi+\overline{\xi} & \overline{\xi}-\xi\\
\overline{\xi}-\xi & \xi+\overline{\xi}
\end{bsmallmatrix}
 \oplus \varepsilon $       &   $\xi\in \mathbb{C}\setminus\mathbb{R}$, \, $\varepsilon \in \{0,1\}$  \\ 
%

        & $x \oplus y \oplus 1  $, \quad $x \oplus y \oplus 0  $       &  $x,y> 0$    \\ 
     &       $x \oplus 0 \oplus y  $, \quad $0_2 \oplus 1$, \quad $0_3$         &  $x,y> 0$    \\ 
        &  $ x\oplus 
\begin{bsmallmatrix}
0 & 1\\
1 & 0
\end{bsmallmatrix}$     &         $x>0$ \\
      &      $\tfrac{1}{2x-i}
\begin{bsmallmatrix}
\frac{4x}{w-1} & -\frac{i}{x}\\
-\frac{i}{x} & \frac{4x}{w+1}
\end{bsmallmatrix}\oplus \varepsilon $                         &  $w=|2x+i|$,\, $x>0$, \, $\varepsilon \in \{0,1\}$   \\  
&  
$\frac{1}{2}\begin{bsmallmatrix}  1 & -1 \\
-1 & 1  \\
\end{bsmallmatrix} \oplus \varepsilon $   &    $\varepsilon \in \{0,1\}$ \\
%
 &  
 $
\tfrac{1}{\sqrt[4]{8}}
\begin{bsmallmatrix}
0 & 0 & i \\
0 & 0 & -i\\
i  & -i & 0
\end{bsmallmatrix}$ &   \\
\hline
  $-1\oplus I_2$ 
   & $\left[u(\lambda_j,\lambda_k)\right]_{j,k=1}^3$  &    $ a>0$, \quad $\lambda_{1}\leq\lambda_{2},\lambda_{3}$\\
   &                                                  & ${\scriptstyle -\lambda_j^3+a\lambda_j^2+2\lambda_j-a=0,\,\,j=1,2,3}$\\
   &                                                  &
$u(\lambda,\mu)=\tfrac{\sign (\lambda)\sign (\mu)(\lambda^2+\mu^2-\lambda^2\mu^2)}{\sqrt{(\lambda^4-\lambda^2+2)(\mu^4-\mu^2+2)}}$\\         
%
               & $-i[p(\lambda_j,\lambda_k)q(\lambda_j)q(\lambda_k)]_{j,k=1}^3$ &  ${\scriptstyle p(\lambda,\mu)=\lambda^2\mu^2-x^2(\lambda^2+\mu^2)+x(\lambda+\mu)+x^4}$ \\
               &                                                &   ${\scriptstyle q(\lambda)=\tfrac{\sqrt{\lambda^2+x^2}}{(x-i\lambda)\sqrt{|\lambda|(\lambda^4+\lambda^2(1-2x^2)+x^2+x^4)}}}$   \\ 
                                                             &                  &   $\lambda_{1}\leq \lambda_{2}\leq\lambda_{3}\text{ eigenvalues of }H_3(x)$, $x> 0$ \\
%
%
%
    & $\tfrac{1}{2\overline{\xi}|\xi|}
\begin{bsmallmatrix}
\xi+\overline{\xi} & \overline{\xi}-\xi\\
\overline{\xi}-\xi & \xi+\overline{\xi}
\end{bsmallmatrix} \oplus x $       &    $\xi\in \mathbb{C}\setminus\mathbb{R}$,\, $x\geq 0$     \\ 
 
        &  $x \oplus 0_2$, $0_2\oplus x$                                             &  $x\geq 0$   \\
        &  $x \oplus y \oplus 0  $,   $0 \oplus x \oplus y  $         &  $x,y> 0$    \\ 
        &   $x \oplus y \oplus z  $, \quad $0_3$                                                  & $x,y,z>0$     \\
      &      $\tfrac{1}{2x-i}
\begin{bsmallmatrix}
\frac{4x}{w-1} & -\frac{i}{x}\\
-\frac{i}{x} & \frac{4x}{w+1}
\end{bsmallmatrix}\oplus y$           &       $w=|2x+i|$, $y\geq 0$,     \\
&  
$\frac{1}{2}\begin{bsmallmatrix}  1 & -1  \\
-1 & 1  \\
\end{bsmallmatrix}\oplus x $   &  $x \geq 0$  \\
%
\hline
$I_3$  & $x \oplus y \oplus z  $, $x \oplus y \oplus 0  $    &  $x,y,z> 0$   \\ 
       &  $x \oplus 0_2  $                            &  $x\geq 0$   \\
\hline
$0_3$  & $0_3 $, $1 \oplus 0_2  $, $I_2 \oplus 0 $ , $I_3$       &    \\ 

\hline
$ I_2\oplus 0$  & $0_3 $, $0_2\oplus 1$, $x \oplus 0_2  $ , $x\oplus 0\oplus 1  $       &  $x> 0$   \\ 
                & $x\oplus y\oplus 0  $ , $x\oplus y \oplus 1$        &  $x,y>0$   \\
                   &    $ x\oplus  
\begin{bsmallmatrix}
0 & 1\\
1 & 0
\end{bsmallmatrix} $      &  $x\geq 0$  \\
\hline
$1 \oplus 0_2$  & $0_3 $ , $0\oplus I_2$, $0\oplus 1 \oplus 0$       &     \\ 
                &   $x \oplus 0_2  $, $x\oplus 1\oplus 0  $ , $x\oplus I_2  $            & $x>0$  \\
                &  $ 
\begin{bsmallmatrix}
0 & 1\\
1 & 0
\end{bsmallmatrix}
\oplus \varepsilon $        &    $\varepsilon\in \{0,1\}$        \\


                      
\end{tabular}
\end{trditev}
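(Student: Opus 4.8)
The idea is that Proposition~\ref{clasA} is nothing but the FORM~3 restatement of the classification already obtained in Theorem~\ref{clasB}, so the plan is to convert each representative $(\widetilde A,\widetilde B)$ of that theorem from FORM~1 into FORM~3. Recall from (\ref{inertia}) and (\ref{NBA}) that for a Hermitian $\widetilde A$ one chooses, by Sylvester's theorem, a nonsingular $Q$ whose columns are orthogonal eigenvectors normalized by $|q_j|^2=1/|\lambda_j|$ (and $|q_j|=1$ when $\lambda_j=0$); then $Q^{*}\widetilde A Q=\mathcal{I}(\widetilde A)$ is the inertia matrix and $(\widetilde A,\widetilde B)\sim(\mathcal{I}(\widetilde A),\mathcal{N}_{\widetilde B}(\widetilde A))$ with $\mathcal{N}_{\widetilde B}(\widetilde A)=Q^{T}\widetilde B Q$. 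The left columns of the tables of Proposition~\ref{clasA} are exactly the admissible inertia matrices (diagonal with entries $\pm1,0$ and $\Trace\ge 0$), so the whole task is to carry out this diagonalization for every entry produced in Theorem~\ref{clasB} and to read off $Q^{T}\widetilde B Q$.

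First I would dispose of the entries in which $\widetilde A$ is already diagonal: here $Q$ only rescales the eigenvectors and reorders them into the standard inertia pattern, and $\widetilde B$ is correspondingly scaled and permuted, which immediately yields the $x\oplus y$, $x\oplus 0$, $0_m$ rows. When $\widetilde A$ is block diagonal with blocks of size $1$ and $2$, I would diagonalize block by block using the explicit eigenvector matrices $Q(x),Q(y),Q(z)$ recorded in Section~\ref{sec1} for $H_2(x),K_1(y),L_1(z)$; then $Q$ is the direct sum of these factors followed by a permutation $\Pi$ collecting the signs, and $\Pi^{T}Q^{T}\widetilde B Q\,\Pi$ reproduces the corresponding $\mathcal{N}_{I_2}$ blocks of Section~\ref{sec1} (for instance the block $\tfrac{1}{2\overline{\xi}|\xi|}\begin{bsmallmatrix}\xi+\overline{\xi}&\overline{\xi}-\xi\\ \overline{\xi}-\xi&\xi+\overline{\xi}\end{bsmallmatrix}$ coming from $L_1(\xi)$). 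The couplings introduced by the anti-diagonal $I_k$ blocks of Lemma~\ref{prepare} are handled in the same way, by diagonalizing the whole structured $\widetilde A$ rather than its pieces.

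The only substantial computations are the genuinely $3\times3$ cases, since $K_m$ and $L_m$ blocks are even-dimensional and hence the only irreducible FORM~1 block of odd size $3$ is $H_3$. For the single block $H_3(x)$ (occurring with $\widetilde B=I_3$ and inertia $-1\oplus I_2$, since $H_3(x)$ has one negative and two positive eigenvalues for $x>0$) the eigenvalues are the roots of $\Delta_{H_3(x)}(\lambda)=-\lambda^3+x\lambda^2+(x^2+1)\lambda-x^3$, and $\mathcal{N}_{I_3}(H_3(x))=Q^{T}Q$ is precisely the matrix $-i[r(\lambda_j,\lambda_k)s(\lambda_j)s(\lambda_k)]_{j,k=1}^{3}$ already computed in Section~\ref{sec1} (here written with $p,q$ in place of $r,s$). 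The remaining $3\times3$ source is the matrix $\begin{bsmallmatrix}0&0&1\\0&a&i\\1&-i&0\end{bsmallmatrix}$ of Theorem~\ref{clasB} with $\widetilde B=I_2\oplus0$; a direct expansion gives the characteristic polynomial $-\lambda^3+a\lambda^2+2\lambda-a$ and the eigenvectors $(1,-i(1-\lambda^2),\lambda)$, and after the required normalization $Q^{T}(I_2\oplus0)Q$ yields the tabulated entries $u(\lambda_j,\lambda_k)$, symmetric in the three roots.

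The main obstacle is exactly these $3\times3$ diagonalizations: the eigenvalues are roots of an irreducible cubic with no usable closed form, so $\mathcal{N}_{\widetilde B}(\widetilde A)$ can only be presented as a symmetric function of the roots, and one must (i) normalize the eigenvectors so that $|q_j|^2=1/|\lambda_j|$ — this is what turns $Q^{*}\widetilde A Q$ into the inertia matrix rather than $\diag(\lambda_j)$ — and (ii) check that the resulting entry functions are independent of the labelling of the roots up to the symmetry built into the indexing $[\,\cdot\,]_{j,k=1}^3$. Everything else reduces to the one- and two-dimensional formulas of Section~\ref{sec1} together with routine tracking of permutations and scalings, which completes the proof.
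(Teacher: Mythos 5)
Your proposal follows essentially the paper's own route: the paper likewise derives Proposition~\ref{clasA} from Theorem~\ref{clasB} by converting each pair $(\widetilde A,\widetilde B)$ there into the form $(\mathcal{I}(\widetilde A),\mathcal{N}_{\widetilde B}(\widetilde A))$ of (\ref{inertia})--(\ref{NBA}), reusing the explicit diagonalizations of $H_2(x)$, $H_3(x)$, $K_1(y)$, $L_1(\xi)$ from Section~\ref{sec1}, and performing the direct computation only for the genuinely coupled $3\times 3$ matrices $M(a,\zeta)=\begin{bsmallmatrix}0&0&1\\0&a&\zeta\\1&\overline{\zeta}&0\end{bsmallmatrix}$, $\zeta\in\{0,i\}$, whose eigenvalue cubic $-\lambda^3+a\lambda^2+2\lambda-a$ and root-symmetric entries $u(\lambda_j,\lambda_k)$ you identify exactly as the paper does. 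Your handling of the diagonal, block-diagonal, and cubic cases, including the inertia normalization $|q_j|=1/\sqrt{|\lambda_j|}$ and the bookkeeping of permutations and phases, matches the paper's proof, so the proposal is correct in the same sense and to the same level of rigor as the original.
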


\begin{proof}
We can easily see that
$
\Big(\begin{bsmallmatrix}
0 & 1 \\
1 & 0
\end{bsmallmatrix}                   , 1\oplus 0\Big)\sim 
\Big(-1\oplus 1,\begin{bsmallmatrix}
                  1  & -i \\
                   -i & 1
                 \end{bsmallmatrix}
                             \Big)
                              $ and
$
\left(\begin{bsmallmatrix}
0 & 0 & 1 \\
0& \epsilon & 0 \\
1 & 0 & 0
\end{bsmallmatrix}                   , 1\oplus 0_2\right)\sim 
\left(-1\oplus 1\oplus \epsilon,\frac{1}{2}\begin{bsmallmatrix}  1 & -1 & 0 \\
-1 & 1 & 0 \\
0 & 0 & 0
\end{bsmallmatrix}\right)$, $\epsilon\in \{0,1\}.$

Next, we examine $M(a,\zeta)=
\begin{bsmallmatrix}
0 & 0 & 1\\
0 & a & \zeta\\
1 & \overline{\zeta} & 0
\end{bsmallmatrix}$, $a\geq 0$, $\zeta\in \{0,i\}$. By inspecting the signs of the coefficients of 
\[
\Delta_{M(a,\zeta)}(\lambda)=-\lambda^3+a\lambda^2+(1+|\zeta|^2)\lambda-a,
\]
we obtain that signs of the eigenvalues of $M(a,\zeta)$ are $-1$, $1$, $\sign (a)$. 
If $a>0$, $\zeta=i$ it is somewhat tedious but still straightforward to compute that
\[
\mathcal{N}_{I_2\oplus 0}(M(a,i))=\left[u(\lambda_j,\lambda_k)\right]_{j,k=1}^3,\quad a>0,
\]
where $\lambda_1,\lambda_2,\lambda_3$ are the eigenvalues of $M(a,i)$ with $\lambda_1$ negative, and
\[
u(\lambda,\mu)=\tfrac{\sign (\lambda)\sign (\mu)(\lambda^2+\mu^2-\lambda^2\mu^2)}{\sqrt{(\lambda^4-\lambda^2+2)(\mu^4-\mu^2+2)}}.
\]
Otherwise we easily get
\begin{align*}
&\mathcal{N}_{I_2\oplus 0}(M(a,0))=
\tfrac{1}{2}
\begin{bmatrix}
1 & -1 \\
-1 & 1 \\
\end{bmatrix}\oplus \widetilde{a}, \quad \widetilde{a}=\left\{
\begin{array}{ll}
a^{-1}, & a> 0 \\
1,         & a=0
\end{array}\right.,\\
&\mathcal{N}_{I_2\oplus 0}(M(0,i))=
\tfrac{1}{\sqrt[4]{8}}
\begin{bmatrix}
0 & 0 & i \\
0 & 0 & -i\\
i  & -i & 0
\end{bmatrix}.
\end{align*}

It remains to consider $(\mathcal{H}^{\epsilon}_m,I_m)\sim (\mathcal{I}(\mathcal{H}^{\epsilon}_m),\mathcal{N}_{I_m}(\mathcal{H}^{\epsilon}_m))$ for $m\in \{1,2,3\}$ and  $(\mathcal{H}^{\epsilon}_r\oplus \mathcal{I}_s,I_r\oplus 0_s)\sim (\mathcal{I}(\mathcal{H}^{\epsilon}_r)\oplus \mathcal{I}_s,\mathcal{N}_{I_r}(\mathcal{H}^{\epsilon}_r)\oplus 0_s)$ for $r,s\in \{1,2\}$ with $r+s\in \{2,3\}$). Conveniently, in the preceding section we have calculated all $\mathcal{I}(\mathcal{H})$ and $\mathcal{N}_{I_m}(\mathcal{H})$ if $\mathcal{H}$ was any of the matrices $H_2(x)$, $H_3(x)$, $K_1(\xi)$ (see (\ref{Hmz}), (\ref{KLm})); remember that $K_1(y)=L_1(-iy)$.

Clearly, $(a\oplus 0,1\oplus 0)\sim (1\oplus 0,\frac{1}{a}\oplus 0)$ for $a>0$, while $(b\oplus 1,1\oplus 0)$, $b\in \mathbb{R}$, splits into classes of three types: $(1\oplus 0,0\oplus 1)$, $(-1\oplus 1,x\oplus 0)$, $(1\oplus 1,x\oplus 0)$, $x>0$. Further, from $(\mathcal{H}_2^{\epsilon},I_2)$ we obtain $(1\oplus 0,x\oplus 1)$, $\big(-1\oplus 1,\tfrac{1}{2\overline{\xi}|\xi|}
\begin{bsmallmatrix}
\xi+\overline{\xi} & \overline{\xi}-\xi\\
\overline{\xi}-\xi & \xi+\overline{\xi}
\end{bsmallmatrix}\big)$, $(-1\oplus 1,x\oplus y)$, $(1\oplus 1,x\oplus y)$, $(0_2,I_2)$,  $\big(-1\oplus 1, \tfrac{1}{2x-i}
\begin{bsmallmatrix}
\frac{4x}{w-1} & -\frac{i}{x}\\
-\frac{i}{x} & \frac{4x}{w+1}
\end{bsmallmatrix}\big)$,  $(1\oplus 0, 
\begin{bsmallmatrix}
0 & 1\\
1 & 0
\end{bsmallmatrix}$), where $w=\sqrt{1+4x^2}$, $x,y>0$, $\xi \in \mathbb{C}\setminus{R}$. By adding a direct summand $0$ to these pairs of matrices we obtain pairs that are $\sim$-congruent to $(a\oplus 0_2,1\oplus 0_2)$, $(b\oplus 1\oplus 0,1\oplus 0_2)$, $(\mathcal{H}_2^{\epsilon}\oplus 0,I_2\oplus 0)$.

Similarly we deal with $(\mathcal{H}_2^{\epsilon}\oplus 1,I_2\oplus 0)$, which yields $(I_3,x\oplus y\oplus 0)$, $(-1\oplus I_2,x\oplus y\oplus 0)$, $(-1\oplus I_2,0\oplus x\oplus y)$, 
$\big(-1\oplus I_2,\tfrac{1}{2\overline{\xi}|\xi|}
\begin{bsmallmatrix}
\xi+\overline{\xi} & \overline{\xi}-\xi\\
\overline{\xi}-\xi & \xi+\overline{\xi}
\end{bsmallmatrix}\oplus 0\big)$, $(I_2\oplus 0,x\oplus 0\oplus 1)$, $(1\oplus  0_2,0\oplus I_2)$, $(-1\oplus 1\oplus 0,x\oplus 0\oplus y)$, 
$\big(-1\oplus I_2, \tfrac{1}{2x-i}
\begin{bsmallmatrix}
\frac{4x}{w-1} & -\frac{i}{x}\\
-\frac{i}{x} & \frac{4x}{w+1}
\end{bsmallmatrix}\oplus 0\big)$, $(I_2\oplus 0, 0\oplus 
\begin{bsmallmatrix}
0 & 1\\
1 & 0
\end{bsmallmatrix}$, $x,y>0$, $w=\sqrt{1+4x^2}$, $\xi \in \mathbb{C}\setminus \mathbb{R}$.

The pair $(b\oplus \mathcal{I}_2,1\oplus 0_2)$, $b\in \mathbb{R}$, transforms to $(1\oplus  0_2,0\oplus 1\oplus 0)$, $(I_2\oplus 0,0_2\oplus 1)$, $(I_2\oplus 0,x\oplus 0_2)$, $(-1\oplus 1\oplus 0,0_2\oplus 1)$, 
$(-1\oplus  I_2,0\oplus x\oplus 0)$, $(I_3,x\oplus 0_2)$, $(-1\oplus I_2,x\oplus 0_2)$, $x>0$.

Finally, from $(\mathcal{H}_3^{\epsilon},I_3)$ we get the following: $(I_3,x\oplus y\oplus z)$, $(-1\oplus I_2,x\oplus y\oplus z)$, 
$\big(-1\oplus 1\oplus 0,\tfrac{1}{2\overline{\xi}|\xi|}
\begin{bsmallmatrix}
\xi+\overline{\xi} & \overline{\xi}-\xi\\
\overline{\xi}-\xi & \xi+\overline{\xi}
\end{bsmallmatrix}\oplus 1\big)$, 
$\big(-1\oplus I_2,\tfrac{1}{2\overline{\xi}|\xi|}
\begin{bsmallmatrix}
\xi+\overline{\xi} & \overline{\xi}-\xi\\
\overline{\xi}-\xi & \xi+\overline{\xi}
\end{bsmallmatrix}\oplus x\big)$, 
$(1\oplus  0_2,x\oplus I_2)$, $(I_2\oplus 0,x\oplus y\oplus 1)$, $(-1\oplus 1\oplus 0,x\oplus y\oplus 1)$, 
$\big(1\oplus 0_2, 
\begin{bsmallmatrix}
0 & 1\\
1 & 0
\end{bsmallmatrix}\oplus 1\big)$, $(0_3,I_3)$,
$\big(-1\oplus 1\oplus 0,x\oplus 
\begin{bsmallmatrix}
0 & 1\\
1 & 0
\end{bsmallmatrix}\big)$, 
$\big(-1\oplus 1 \oplus 0, \tfrac{1}{2x-i}
\begin{bsmallmatrix}
\frac{4x}{w-1} & -\frac{i}{x}\\
-\frac{i}{x} & \frac{4x}{w+1}
\end{bsmallmatrix}\oplus 1\big)$,
$\big(-1\oplus I_2, \tfrac{1}{2x-i}
\begin{bsmallmatrix}
\frac{4x}{w-1} & -\frac{i}{x}\\
-\frac{i}{x} & \frac{4x}{w+1}
\end{bsmallmatrix}\oplus x\big)$, 
$\frac{1}{2}
\begin{bsmallmatrix}
i & -i & \sqrt{2}\\
-i & i & \sqrt{2}\\
\sqrt{2} & \sqrt{2} & 0
\end{bsmallmatrix}$, 
$x,y,z>0$, $w=\sqrt{1+4x^2}$.
\end{proof}

The proposition can be relatively easily generalized to the case of $4\times 4$ matrices. However, at the time of this writing we are not yet able to give a result for $n\times n$ matrices with $n\geq 5$ in a sufficiently nice form.

\begin{remark}
To some extend, Theorem \ref{clasB} could also be applied to tell us something about a more general situation for a pair of matrices $(A,B)$ with $A$ arbitrary. Indeed, $A$ can be written in a unique way as $A=H_1+iH_2$ with $H_1,H_2$ Hermitian, and then one of the matrices $H_1$ or $H_2$ could be put into the normal form, while keeping $B$ intact. It reduces the number of parameters by roughly one half. We also note here that the method, which was used in \cite{Coff} for the case of $2\times 2$ matrices (i.e. putting $A$ into a nice form first, and then $T$-conjugating $B$ by the matrices preserving $A$ under $*$-conjugation), does not seem to adapt to the case of $3\times 3$ matrices due to the involved computations.  
\end{remark}

\section{Appendix}\label{appendix}

\subsection{Consimilarity}

\begin{trditev}\label{ConSP} 
\begin{enumerate}
\item (\cite[Theorem 4.1]{HongHorn}) Let $A$, $B$ be two $n\times n$ matrices. Then $A\overline{A}$, $B\overline{B}$ are similar precisely when $A$, $B$ are consimilar and satisfy the alternating product rank condition ($\rank \Pi^k(A\overline{A})=\rank \Pi^k (B\overline{B})$).
\item (\cite[Corollary 4.4]{HongHorn}) Let $A$, $B$ be two $n\times n$ matrices and let $C$ be a $m\times m $ matrix. Then $A\oplus C$ is consimilar to $B\oplus C$ if and only if $A$ is consimilar to $B$.
\end{enumerate}
\end{trditev}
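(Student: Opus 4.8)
The plan is to treat both parts by reducing them to the structure theory of consimilarity via the identity linking consimilarity of $A$ to ordinary similarity of the product $A\overline{A}$; in each part one implication is an immediate computation, while the other rests on the consimilarity canonical form.

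For part (1), I would first dispose of the easy implication. If $A$ and $B$ are consimilar, then $B=\overline{S}^{-1}AS$ for some nonsingular $S$, whence $\overline{B}=S^{-1}\overline{A}\,\overline{S}$ and therefore
\[
B\overline{B}=\overline{S}^{-1}AS\cdot S^{-1}\overline{A}\,\overline{S}=\overline{S}^{-1}(A\overline{A})\overline{S},
\]
so $A\overline{A}$ and $B\overline{B}$ are similar (via $\overline{S}$). The same substitution shows that each alternating product $\Pi^k(B\overline{B})$ equals $\Pi^k(A\overline{A})$ conjugated by a nonsingular matrix ($\overline{S}$ when $k$ is even, $S$ when $k$ is odd), so the ranks agree and the alternating-product rank condition holds automatically. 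The substance of the statement is the converse: similarity of $A\overline{A}$ and $B\overline{B}$ alone does not force consimilarity, and the discrepancy is concentrated entirely in the singular part (the eigenvalue $0$ of $A\overline{A}$), where consimilarity is strictly finer than the Jordan structure of the product. Here I would invoke the consimilarity canonical form, whose nonsingular blocks are determined by the similarity class of $A\overline{A}$ (with the pairing $\lambda\leftrightarrow\overline{\lambda}$ and a sign for negative real eigenvalues) and whose zero-blocks are pinned down precisely by the ranks $\rank\Pi^k(A\overline{A})$. Matching both data sets yields identical canonical forms, hence consimilarity.

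For part (2), the ``if'' direction is immediate: a consimilarity $B=\overline{S}^{-1}AS$ extends to $B\oplus C=\overline{(S\oplus I_m)}^{-1}(A\oplus C)(S\oplus I_m)$. For the ``only if'' direction I would reduce to part (1) by additivity of the relevant invariants under direct sums. Since $(A\oplus C)\overline{(A\oplus C)}=A\overline{A}\oplus C\overline{C}$ and likewise for $B$, a consimilarity $A\oplus C$ to $B\oplus C$ gives, by part (1), the similarity $A\overline{A}\oplus C\overline{C}$ to $B\overline{B}\oplus C\overline{C}$, and ordinary Jordan-form cancellation delivers $A\overline{A}$ similar to $B\overline{B}$. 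Moreover $\Pi^k$ respects direct sums, so
\[
\rank\Pi^k(A\overline{A})+\rank\Pi^k(C\overline{C})=\rank\Pi^k(B\overline{B})+\rank\Pi^k(C\overline{C}),
\]
and cancelling the common term gives $\rank\Pi^k(A\overline{A})=\rank\Pi^k(B\overline{B})$ for every $k$. With the similarity and the rank condition in hand, part (1) returns the consimilarity of $A$ and $B$.

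The main obstacle is the converse half of part (1): it is exactly where the alternating-product ranks are needed to separate consimilarity classes sharing the same $A\overline{A}$, and its proof requires the full consimilarity canonical form of Hong and Horn rather than any short manipulation. Everything else—both forward implications and the cancellation in part (2)—is elementary once that classification is granted.
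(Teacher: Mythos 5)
The paper itself contains no proof of this proposition: it is stated in the Appendix purely as a quotation of Hong--Horn's Theorem 4.1 and Corollary 4.4 from \cite{HongHorn}, to be used elsewhere (identifying the zero-blocks of the quasi-Jordan form, comparing FORM 1 with FORM 2). So there is no internal argument to compare yours against; the relevant comparison is with the cited source, and your sketch follows essentially that source's route. The forward implications are the elementary computations you give; the converse in (1) rests on the consimilarity canonical form (nonsingular blocks governed by the Jordan structure of $A\overline{A}$, zero blocks pinned down by the alternating-product ranks); and (2) is deduced from (1) via additivity of both invariants under direct sums together with Jordan-block cancellation of the common summand $C\overline{C}$. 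That deduction of (2) is correct: $\Pi^k(A\oplus C)=\Pi^k(A)\oplus \Pi^k(C)$, so ranks add, and cancellation of Jordan blocks is legitimate.

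Two caveats. First, your proof of the converse half of (1) is a reduction, not an argument: ``matching both data sets yields identical canonical forms'' is precisely the uniqueness statement for the Hong--Horn canonical form, so you are invoking the theorem to be proved in only slightly disguised form. That is defensible here, since the statement is itself a citation, but you should say explicitly which prior result (existence and uniqueness of the consimilarity canonical form, with zero blocks determined by $\rank \Pi^k$) you take as given. Second, you have tacitly repaired the paper's phrasing: as literally written, the proposition asserts ``$A\overline{A}$, $B\overline{B}$ similar $\Longleftrightarrow$ $A$, $B$ consimilar and the rank condition holds,'' and the forward implication of that sentence is false --- for $A=\begin{bsmallmatrix}0&1\\0&0\end{bsmallmatrix}$, $B=0_2$ the products $A\overline{A}$ and $B\overline{B}$ are both zero, hence similar, yet $A$ and $B$ are not consimilar (consimilarity preserves rank) and $\rank \Pi^1(A)=1\neq 0=\rank \Pi^1(B)$. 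What you actually prove, and what Hong--Horn state, is ``$A$, $B$ consimilar $\Longleftrightarrow$ $A\overline{A}$, $B\overline{B}$ similar and the rank condition holds.'' A final trifle: for odd $k$ one gets $\Pi^k(B)=\overline{S}^{-1}\Pi^k(A)S$, an equivalence rather than a conjugation; rank equality still follows, so nothing is harmed.
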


\subsection{Matrices of double size}\label{MDS}

\begin{lemma}\label{ConForm}
Assume $A=A_1+iA_2$, where $A_1,A_2$ are real $n\times n$ matrices and let  
$\widehat{A}=
\begin{bsmallmatrix}
0 & \overline{A} \\
A & 0
\end{bsmallmatrix}$
and 
$\widetilde{A}=
\begin{bsmallmatrix}
A_1 & A_2 \\
A_2 & -A_1
\end{bsmallmatrix}$ be the corresponding two block-matrices. Then
\begin{enumerate}
\item \cite[Problem 1.3.P21]{HornJohn} $\widetilde{A}$ and $\widehat{A}$ are similar and $\Delta_{\widetilde{A}}(\lambda)=\Delta_{\widehat{A}}(\lambda)=\Delta_{A\overline{A}}(\lambda^2)$.\\
(In particular, if $\lambda$ is the eigenvalue of $A\overline{A}$, $\pm\lambda{}$ is the eigenvalue of $\widehat{A}$ or $\widetilde{A}$; the non-real eigenvalues of $A\overline{A}$ occur in conjugate pairs.)
\item \label{CF2}\cite[Theorems 5 and 6]{Bern} The real Jordan form of $\widetilde{A}$ or $\widehat{A}$ is equal to $\mathcal{J}(\widetilde{A})=\mathcal{J}(\widehat{A})=\mathcal{J}^+\oplus \mathcal{J}^-$, where $\mathcal{J}^+$ ($\mathcal{J}^-$) consists of real Jordan blocks $J_{\alpha_j}(\lambda_,1)$ or $J_{\beta_k}(\Lambda_k,I_2)$ corresponding to the eigenvalues with non-negative real part ($J_{\alpha_j}(-\lambda_,1)$ or $J_{\beta_k}(-\Lambda_k,-I_2)$ with non-positive real part), and in addition each with only half of the blocks for eigenvalues with no real part (they occur in even pairs). 
Moreover, $A\overline{A}$ is similar to $(\mathcal{J}^+)^2$.
\end{enumerate}
\end{lemma}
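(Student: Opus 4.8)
For part (1), the plan is to separate the two assertions and treat similarity and the characteristic polynomial independently. I would first record the tensor decompositions $\widehat{A}=\sigma_x\otimes A_1+\sigma_y\otimes A_2$ and $\widetilde{A}=\sigma_z\otimes A_1+\sigma_x\otimes A_2$, where $\sigma_x=\begin{bsmallmatrix}0&1\\1&0\end{bsmallmatrix}$, $\sigma_y=\begin{bsmallmatrix}0&-i\\i&0\end{bsmallmatrix}$, $\sigma_z=\begin{bsmallmatrix}1&0\\0&-1\end{bsmallmatrix}$ are the Pauli matrices. Since conjugation by $U\otimes I_n$ acts blockwise as $\sigma_a\mapsto U^{-1}\sigma_a U$, it suffices to produce a single $U\in GL_2(\C)$ with $U^{-1}\sigma_x U=\sigma_z$ and $U^{-1}\sigma_y U=\sigma_x$. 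Under the standard $SU(2)\to SO(3)$ correspondence this is the rotation cyclically permuting the coordinate axes $x\to z\to y\to x$, so such a $U$ exists (and can be written down explicitly); conjugating $\widehat{A}$ by $U\otimes I_n$ then yields $\widetilde{A}$. For the characteristic polynomial I would expand $\det(\lambda I_{2n}-\widehat{A})$ by a Schur complement with respect to the top-left block $\lambda I_n$, which for $\lambda\neq 0$ reduces to $\det(\lambda^2 I_n-A\overline{A})=\Delta_{A\overline{A}}(\lambda^2)$; the value at $\lambda=0$ follows by continuity, and the identity for $\widetilde{A}$ is then inherited from the similarity just established.

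The parenthetical consequences follow at once. The identity $\Delta_{\widehat{A}}(\lambda)=\Delta_{A\overline{A}}(\lambda^2)$ shows the eigenvalues of $\widehat{A}$ (and of $\widetilde{A}$) are exactly the two square roots $\pm\sqrt{\mu}$ of each eigenvalue $\mu$ of $A\overline{A}$. Moreover $\widetilde{A}$ is a real matrix, so $\Delta_{A\overline{A}}(\lambda^2)$ has real coefficients and its root set is closed under complex conjugation; this forces the non-real eigenvalues of $A\overline{A}$ to occur in conjugate pairs.

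For part (2), the structural input is the symmetry $\widehat{A}\sim-\widehat{A}$, realized by conjugation with $I_n\oplus(-I_n)$, so that the multiset of Jordan blocks of $\widehat{A}$ is invariant under negating the eigenvalue while preserving block sizes. This pairs each block at $\nu$ with a block of equal size at $-\nu$, and I would define $\mathcal{J}^+$ by choosing from each such pair the representative with $\Rea\nu\ge 0$ and $\mathcal{J}^-$ the complementary one, giving precisely the claimed splitting. To identify $(\mathcal{J}^+)^2$ with $A\overline{A}$ I would use $\widehat{A}^2=\overline{A}A\oplus A\overline{A}$ together with the fact that squaring a Jordan block $J_m(\nu)$ with $\nu\neq 0$ again produces a single block $J_m(\nu^2)$; squaring the selected half then reproduces the Jordan structure of one summand, namely $A\overline{A}$. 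Passing from the complex to the real Jordan form (blocks $J_{\alpha}(\lambda,1)$ and $J_{\beta}(\Lambda_k,I_2)$) is routine once eigenvalues are grouped into conjugate pairs.

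The main obstacle, and the reason the cited theorems of Horn--Johnson and Bernhardsson are invoked rather than a one-line argument, is the behaviour at the eigenvalue $0$ and at purely imaginary eigenvalues. Squaring does not preserve a single nilpotent Jordan block, since $J_m(0)$ splits into blocks of sizes $\lceil m/2\rceil$ and $\lfloor m/2\rfloor$; matching the nilpotent part of $\widehat{A}^2$ with that of $A\overline{A}$ therefore requires the alternating-product rank condition recorded in Proposition \ref{ConSP}. For purely imaginary $\nu$ the sign condition $\Rea\nu\ge 0$ no longer distinguishes $\nu$ from $-\nu=\overline{\nu}$, so these blocks cannot be assigned canonically to $\mathcal{J}^+$ or $\mathcal{J}^-$; the symmetry then forces them to occur in even numbers and be split evenly between the two halves, exactly as the statement asserts. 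Handling these degenerate spectra is where all the bookkeeping concentrates.
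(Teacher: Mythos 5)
Your part (1) is correct, and it actually supplies a proof where the paper merely cites Horn--Johnson: both the Pauli-tensor conjugation and the Schur-complement computation are sound. The genuine gap is in part (2), at the eigenvalue $0$. The symmetry $\widehat{A}\sim-\widehat{A}$ (conjugation by $I_n\oplus(-I_n)$) pairs a Jordan block at $\nu$ with an equal-sized block at $-\nu$ only when $\nu\neq-\nu$; at $\nu=0$ it carries no information, because every nilpotent Jordan structure is automatically invariant under negation: $-J_m(0)$ is similar to $J_m(0)$ via $\diag(1,-1,\dots,(-1)^{m-1})$, so for instance $J_3(0)$ is a real matrix similar to its own negative whose nilpotent block count is odd. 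Consequently your definition of the splitting $\mathcal{J}^+\oplus\mathcal{J}^-$ --- ``choose from each $\pm$ pair the representative with $\Rea\nu\ge 0$'' --- cannot even be formulated at $0$, and the assertion that the blocks there ``occur in even pairs'' is exactly the nontrivial content of the lemma for matrices of the special form $\widehat{A}$; it is not a consequence of the symmetry, and your sentence attributing it to the symmetry is valid only for purely imaginary $\nu\neq 0$. Deferring to Proposition \ref{ConSP} does not repair this: that proposition is a criterion for consimilarity of two given matrices and says nothing about the parity of the nilpotent blocks of $\widehat{A}$, nor about matching the nilpotent part of $(\mathcal{J}^+)^2$ with that of $A\overline{A}$ --- a matching your block-squaring argument also leaves open, since $J_m(0)^2$ splits into two blocks.

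This zero-eigenvalue analysis is precisely where the paper's sketch does its work: from any Jordan chain $t_j=\begin{bsmallmatrix}u_j\\ v_j\end{bsmallmatrix}$ of $\widetilde{A}$ at a real eigenvalue $\lambda$ it manufactures a second chain $t_j'=\begin{bsmallmatrix}(-1)^{j}v_j\\ (-1)^{j+1}u_j\end{bsmallmatrix}$ at $-\lambda$; at $\lambda=0$ this produces from each nilpotent chain an independent companion chain, which is what forces the blocks at $0$ to come in pairs, and the accompanying identities $A\overline{A}\overline{W}=\overline{W}(J(0))^2$, $A\overline{A}\overline{W}'=\overline{W}'(J(0))^2$ are what link the nilpotent part of $A\overline{A}$ to $(\mathcal{J}^+)^2$. (Alternatively, your rank idea can be made honest: $\widehat{A}^k$ is block (anti)diagonal with the two alternating products of length $k$ as blocks, and these are complex conjugates of each other, so $\rank\widehat{A}^k$ is even for all $k$ and hence every Weyr characteristic of $\widehat{A}$ at $0$ is even; but this computation must actually be carried out --- it is not contained in Proposition \ref{ConSP}.) For the nonsingular part your route does go through, provided you also record that $\overline{A}A=\overline{A\overline{A}}$ and that the nonsingular parts of $A\overline{A}$ and $\overline{A}A$ are similar, so that squaring the selected half reproduces $A\overline{A}$ and not some mixture; the paper instead settles the nonsingular part in one stroke with the explicit similarity built from $S$ and $J^{\pm\frac{1}{2}}$.
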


\begin{proof}[Sketch of the proof of Lemma \ref{ConForm}(\ref{CF2})]
Let $J$ be the non-singular part of the Jordan canonical form of the matrix $A\overline{A}$, and let the matrix $S$ consists of its corresponding generalized eigenvectors (i.e. $A\overline{A}S=SJ$). If $J^{\frac{1}{2}}$ is the square root of $J$ and such that the real parts of its eigenvalues are non-negative. Then
\[
\begin{bmatrix}
0 & \overline{A} \\
A & 0
\end{bmatrix}
\begin{bmatrix}
\overline{A}SJ^{-\frac{1}{2}} & -\overline{A}SJ^{-\frac{1}{2}} \\
S & S
\end{bmatrix}
=
\begin{bmatrix}
\overline{A}SJ^{-\frac{1}{2}} & -\overline{A}SJ^{-\frac{1}{2}} \\
S & S
\end{bmatrix}
\begin{bmatrix}
J^{\frac{1}{2}} & 0 \\
0 & -J^{\frac{1}{2}}
\end{bmatrix},
\]

On the other hand, it is not difficult to see that for Jordan block $J_m(\lambda,1)$ of $\widetilde{A}$ corresponding to the real eigenvalue $\lambda$ we have $\widetilde{A}T=TJ(\lambda)$ precisely when $AW=\overline{W}J(\lambda)$, where $T$ consist of generalized eigenvectors $t_j=
\begin{bsmallmatrix}
u_j \\
v_j
\end{bsmallmatrix}$, $j\in\{1,\ldots,m\}$, of $\widetilde{A}$ with respect to $\lambda$ ($(\widetilde{A}-\lambda I)t_{j+1}=t_j$), and $w_j=u_j-iv_j$, for all $j$, are the columns of $W$. Observe that $t_j'=
\begin{bsmallmatrix}
(-1)^{j}v_j \\
(-1)^{j+1}u_j
\end{bsmallmatrix}$, $j\in\{1,\ldots,m\}$, forming the matrix $T'$, are the generalized eigenvectors of $\widetilde{A}$ corresponding to the eigenvalue $-\lambda$, thus $\widetilde{A}T'=T'J(-\lambda)$ and $AW'=\overline{W}'J(-\lambda)$ with  $w_j'=(-1)^{j}v_j+i(-1)^{j+1}u_j$ the columns of $W'$. In particular, the Jordan blocks of $\widetilde{A}$ corresponding to the zero-eigenvalues occur in pairs. Also, $A\overline{A}\overline{W}=\overline{W}(J(0))^2$, $A\overline{A}\overline{W}'=\overline{W}'(J(0))^2$ and this concludes the proof of similarity for $A\overline{A}$ and $(\mathcal{J}^+)^2$.
\end{proof}

\begin{lemma}\label{lemaL4}
Let $A$ be an $n\times n$ non-singular matrix. Then $AA^*$ is positive definite, and assume that $u_1,\ldots,u_n\in \mathbb{C}^n$ are its eigenvectors, corresponding to eigenvalues $\lambda_1,\ldots,\lambda_n$. Furthermore, for  $j\in \{1,\ldots,m\}$, the pairs $
\begin{bsmallmatrix}
u_j \\
\frac{1}{\sqrt{\lambda_j}}A^{*}u_j
\end{bsmallmatrix}$, $
\begin{bsmallmatrix}
u_j \\
-\frac{1}{\sqrt{\lambda_j}}A^{*}u_j
\end{bsmallmatrix}
\in \mathbb{C}^{2n}$, are the eigenvectors 
with the corresponding eigenvalues $\pm\sqrt{\lambda_j}$ of the matrix $
\begin{bsmallmatrix}
0 & A \\
A^{*} & 0
\end{bsmallmatrix}$.
In particular, if $A=iB$ for some hermitian matrix $B$, then $AA^*=B^2$ and the pairs $
\begin{bsmallmatrix}
u_j \\
\frac{1}{\sqrt{\lambda_j}}iB u_j
\end{bsmallmatrix}$, 
$
\begin{bsmallmatrix}
u_j \\
-\frac{1}{\sqrt{\lambda_j}}iB u_j
\end{bsmallmatrix}
\in \mathbb{C}^{2n}$, are the eigenvectors 
with the corresponding eigenvalues $\pm\sqrt{\lambda_j}$ of the matrix $\begin{bsmallmatrix}
0 & iB \\
-iB & 0
\end{bsmallmatrix}$.
\end{lemma}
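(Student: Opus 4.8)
The plan is to treat the lemma as two short direct verifications: once positive definiteness of $AA^{*}$ is established, the eigenvector claims reduce to a single block-matrix multiplication, and the special case $A=iB$ is then a substitution.

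First I would establish that $AA^{*}$ is positive definite. For any $x\in\mathbb{C}^{n}$ one has $x^{*}AA^{*}x=(A^{*}x)^{*}(A^{*}x)\ge 0$, with equality precisely when $A^{*}x=0$; since $A$ (hence $A^{*}$) is non-singular, this forces $x=0$, so $AA^{*}$ is positive definite. Being Hermitian and positive definite, $AA^{*}$ admits an orthonormal eigenbasis $u_{1},\dots,u_{n}$ with strictly positive real eigenvalues $\lambda_{1},\dots,\lambda_{n}$; in particular each $\sqrt{\lambda_{j}}>0$ is a well-defined real number, which is exactly what makes the candidate vectors meaningful.

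Next I would verify the main assertion by computation. Writing $M=\begin{bsmallmatrix} 0 & A\\ A^{*} & 0\end{bsmallmatrix}$ and using $AA^{*}u_{j}=\lambda_{j}u_{j}$, one finds
\[
M\begin{bsmallmatrix} u_{j}\\ \pm\tfrac{1}{\sqrt{\lambda_{j}}}A^{*}u_{j}\end{bsmallmatrix}
=\begin{bsmallmatrix} \pm\tfrac{1}{\sqrt{\lambda_{j}}}AA^{*}u_{j}\\ A^{*}u_{j}\end{bsmallmatrix}
=\begin{bsmallmatrix} \pm\sqrt{\lambda_{j}}\,u_{j}\\ A^{*}u_{j}\end{bsmallmatrix}
=\pm\sqrt{\lambda_{j}}\begin{bsmallmatrix} u_{j}\\ \pm\tfrac{1}{\sqrt{\lambda_{j}}}A^{*}u_{j}\end{bsmallmatrix},
\]
so each of the two vectors is an eigenvector of $M$ with eigenvalue $\pm\sqrt{\lambda_{j}}$, as claimed. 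If one additionally wants these $2n$ vectors to form an eigenbasis, it suffices to note that for fixed $j$ the two vectors differ only in their nonzero second block (since $A^{*}u_{j}\neq 0$), while vectors for distinct $j$ are independent because the $u_{j}$ are.

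Finally, for the special case $A=iB$ with $B$ Hermitian I would substitute $A^{*}=-iB^{*}=-iB$, giving $AA^{*}=(iB)(-iB)=B^{2}$ and $M=\begin{bsmallmatrix} 0 & iB\\ -iB & 0\end{bsmallmatrix}$. Then $\tfrac{1}{\sqrt{\lambda_{j}}}A^{*}u_{j}=-\tfrac{i}{\sqrt{\lambda_{j}}}Bu_{j}$, so the vectors $\begin{bsmallmatrix} u_{j}\\ \pm\tfrac{i}{\sqrt{\lambda_{j}}}Bu_{j}\end{bsmallmatrix}$ named in the statement are exactly the two vectors treated above (with the sign relabelled), and they inherit the eigenvalues $\mp\sqrt{\lambda_{j}}$ from the general computation; the eigenvalues $\pm\sqrt{\lambda_{j}}$ thus occur as asserted. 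There is no genuine obstacle in this lemma: the whole content is the single block multiplication displayed above, and the only points requiring care are the appeal to positive definiteness that legitimizes the real square roots $\sqrt{\lambda_{j}}$ and the harmless bookkeeping of signs in passing to the Hermitian case.
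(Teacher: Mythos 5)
Your proof is correct, and it is exactly the intended argument: the paper states Lemma \ref{lemaL4} in the Appendix without any proof, treating it as a routine verification, and your direct block multiplication together with the positive-definiteness check $x^{*}AA^{*}x=\lVert A^{*}x\rVert^{2}$ supplies precisely that verification. You also handled the one genuinely delicate point — that in the case $A=iB$ the listed vectors pick up the eigenvalues with the roles of $\pm$ swapped relative to the general case — so nothing is missing.
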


\end{document}